\definecolor{dunkelblau}{rgb}{0,0,0.7}
\newtheoremstyle{standard}{9pt}{9pt}{\itshape}{}{\bfseries}{.}{.5em}{}
\theoremstyle{standard}
\newtheorem{lemma}{Lemma}[section]
\newtheorem{thm}[lemma]{Theorem}
\newtheorem{cor}[lemma]{Corollary} 
\newtheorem*{hal}{Theorem}
\newtheoremstyle{definition}{9pt}{9pt}{}{}{\bfseries}{.}{.5em}{}    
\theoremstyle{definition}
\newtheorem{defi}[lemma]{Definition}  
\newtheorem{rem}[lemma]{Remark}
\newtheorem{ex}[lemma]{Example}
\tikzset{nodes={inner sep=2pt}}
\let\originalleft\left
\let\originalright\right
\renewcommand{\left}{\mathopen{}\mathclose\bgroup\originalleft}
\renewcommand{\right}{\aftergroup\egroup\originalright}
\newcommand\iso{\xrightarrow{\,\smash{\raisebox{-0.3ex}{\ensuremath{\scriptstyle\sim}}}\,}}
\newcommand\pfeil[1]{\xrightarrow{\;#1\;}}
\let\para\S
\newcommand{\IN}{\mathds{N}}
\newcommand{\IZ}{\mathds{Z}}
\newcommand{\IQ}{\mathds{Q}}
\newcommand{\IF}{\mathds{F}}
\newcommand{\IP}{\mathds{P}}
\newcommand{\C}{\mathcal{C}}
\newcommand{\X}{\mathcal{X}}
\newcommand{\V}{\mathcal{V}}
\renewcommand{\SS}{\mathcal{S}}
\renewcommand{\phi}{\varphi}
\renewcommand{\epsilon}{\varepsilon}
\newcommand{\bound}{\mathrm{bounded}}
\DeclareMathOperator{\p}{\mathfrak{p}}
\DeclareMathOperator{\Set}{\mathsf{Set}}
\DeclareMathOperator{\CAT}{\mathsf{CAT}}
\DeclareMathOperator{\Mod}{\mathsf{Mod}}
\DeclareMathOperator{\CoMod}{\mathsf{CoMod}}
\DeclareMathOperator{\gr}{\mathsf{gr}}
\DeclareMathOperator{\Qcoh}{\mathsf{Qcoh}}
\DeclareMathOperator{\SPEC}{\mathsf{Spec}}
\DeclareMathOperator{\FinSet}{\mathsf{FinSet}}
\DeclareMathOperator{\fp}{\mathsf{fp}}
\DeclareMathOperator{\fg}{\mathsf{fg}}
\DeclareMathOperator{\CAlg}{\mathsf{CAlg}}
\DeclareMathOperator{\Hom}{Hom}
\DeclareMathOperator{\HOM}{\underline{\Hom}}
\DeclareMathOperator{\End}{End}
\DeclareMathOperator{\Sym}{Sym}
\DeclareMathOperator{\colim}{colim}
\DeclareMathOperator{\id}{id}
\DeclareMathOperator{\op}{op}
\DeclareMathOperator{\Ann}{Ann}
\DeclareMathOperator{\proj}{\mathsf{proj}}
\DeclareMathOperator{\Spec}{Spec}
\begin{document}

\begin{abstract}\noindent
Let $k$ be a commutative $\IQ$-algebra. We study families of functors between categories of finitely generated $R$-modules which are defined for all commutative $k$-algebras $R$ simultaneously and are compatible with base changes. These operations turn out to be Schur functors associated to $k$-linear representations of symmetric groups. This result is closely related to Macdonald's classification of polynomial functors.
\end{abstract}

\bigskip

\title[Operations on categories of modules]{Operations on categories of modules\\are given by Schur functors}
\author{Martin Brandenburg}
\email{brandenburg@uni-muenster.de}

\maketitle


\section{Introduction}

\thispagestyle{empty}

Let $k$ be a commutative ring. We would like to understand functors between categories of finitely generated modules
\[F_R : \Mod_{\fg}(R) \to \Mod_{\fg}(R)\]
which are defined for all commutative $k$-algebras $R$ simultaneously and behave well with respect to base changes. This means that there are isomorphisms of $S$-modules
\[F_R(M) \otimes_R S \iso F_S(M \otimes_R S)\]
for $k$-homomorphisms $R \to S$ and finitely generated $R$-modules $M$, which are coherent in a suitable sense; see Definition \ref{defop} for details. We will call them \emph{operations} over $k$. Typical examples include the $n$-th tensor power $M \mapsto M^{\otimes n}$, the $n$-th symmetric power $M \mapsto \Sym^n(M)$ and the $n$-th exterior power $M \mapsto \Lambda^n(M)$. More generally, if $V_n$ is any finitely generated right $k[\Sigma_n]$-module, then
\[M \mapsto V_n \otimes_{k[\Sigma_n]} M^{\otimes n}\]
is an operation; here the symmetric group $\Sigma_n$ acts from the left on $M^{\otimes n}$ by permuting the tensor factors. Under finiteness conditions any direct sum of such operations is again an operation. We will call them \emph{Schur operations}, because if $k$ is a field and the $k$-algebra $R$ is fixed, or even $R=k$, these functors are called \emph{Schur functors} in representation theory \cite[Section 6.1]{FH91} and operad theory \cite[Section 5.1]{LV12}. In Joyal's theory of linear species \cite[Chapitre 4]{J86} they are called \emph{analytic functors}. Our main result states that, under suitable assumptions on $k$, every operation is a Schur operation.

\begin{hal}
Let $k$ be a commutative $\IQ$-algebra. Then every operation $\Mod_{\fg} \to \Mod_{\fg}$ over $k$ is isomorphic to a Schur operation
\[M \mapsto \bigoplus_{n \geq 0} V_n \otimes_{k[\Sigma_n]} M^{\otimes n}\]
for some sequence of finitely generated right $k[\Sigma_n]$-modules $V_n$. A similar classification holds for operations $\Mod_{\fp} \to \Mod_{\fp}$ between categories of finitely presented modules.
\end{hal}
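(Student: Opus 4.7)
My strategy is to recover the Schur coefficients $V_n$ from $F$ by extracting the "multilinear" weight components of $F$ applied to finitely generated free modules, to build a canonical natural transformation $\Phi$ from the candidate Schur operation to $F$, and to prove that $\Phi$ is an isomorphism --- first on free modules using the base-change axiom together with Schur--Weyl duality in characteristic zero, and then on arbitrary finitely generated (or presented) modules by a standard presentation argument.

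\textbf{Construction.} For each $n \geq 0$ set $T_n := F_k(k^n)$. The base-change axiom gives $F_R(R^n) \cong T_n \otimes_k R$, and the naturality of this isomorphism in $R$-linear endomorphisms endows $T_n$ with a natural-in-$R$ action of $\End_R(R^n)$, equivalently an algebraic action of the matrix monoid scheme on $T_n$. The diagonal torus inside $GL_n$ produces a $\IZ^n$-grading on $T_n$; define $V_n \subseteq T_n$ to be the summand of multilinear weight $(1,1,\ldots,1)$. The permutation subgroup $\Sigma_n \subset GL_n(k)$ preserves this weight and thereby makes $V_n$ a $k[\Sigma_n]$-module, which is finitely generated because $T_n$ is finitely generated over $k$ by assumption. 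I then define the natural transformation by evaluation: any tuple $(m_1,\ldots,m_n) \in M^n$ determines an $R$-linear map $R^n \to M$, and applying $F_R$ and restricting along the inclusion $V_n \otimes_k R \subseteq T_n \otimes_k R = F_R(R^n)$ yields a $\Sigma_n$-equivariant, $k$-multilinear pairing $V_n \times M^n \to F_R(M)$ that descends to
\[
\Phi_{R,M} \colon \bigoplus_{n \geq 0} V_n \otimes_{k[\Sigma_n]} M^{\otimes n} \longrightarrow F_R(M).
\]

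\textbf{Verification.} Both sides of $\Phi$ are operations in the sense of Definition \ref{defop} (the source by the examples in the introduction, once finiteness is established), and $\Phi$ is itself compatible with base change, so to check that $\Phi$ is an isomorphism it suffices to do so for $M = k^m$ over $R = k$. There both sides are polynomial $GL_m$-representations and $\Phi$ respects the $\IZ^m$-grading; the claim reduces to the Schur--Weyl decomposition of $T_m$, namely $T_m \cong \bigoplus_\lambda W_\lambda \otimes_k S^\lambda(k^m)$ with multiplicity spaces $W_\lambda$, under which the multilinear weight subspace is exactly $\bigoplus_{|\lambda|=n} W_\lambda \otimes_k \mathrm{Sp}^\lambda \cong V_n$. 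The extension from free to arbitrary finitely generated $M$ is handled by choosing a surjection $R^m \twoheadrightarrow M$ and comparing cokernels, using right exactness of the source (tensor products are right exact) and the right-exact behaviour of $F$ on surjections between free modules (which one extracts from the base-change axiom); the finitely presented case uses a two-step free presentation.

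\textbf{Main obstacles.} The two substantive points are (i) rigorously establishing that the $\End_R(R^n)$-action on $T_n$ is polynomial/algebraic over a general commutative $\IQ$-algebra $k$, which requires encoding the action as a map out of the universal matrix ring $k[x_{ij}]$ and invoking flat descent from $\IQ$, where classical polynomial representation theory gives the required decomposition; and (ii) the vanishing $V_n = 0$ for $n$ beyond some bound, which is needed for the direct sum to be finitely generated and which I would derive from polynomial growth of $\mathrm{rank}(F_\kappa(\kappa^N))$ in $N$ over residue fields $\kappa$ of $k$, combined with the finite generation hypothesis on $F$.
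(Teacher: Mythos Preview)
Your construction of $V_n$ as the multilinear weight space of $F_k(k^n)$ is essentially the paper's $\V_F = (L_F)_k(k,\dotsc,k)$ (Definition~\ref{action}), and your comparison map $\Phi$ is the inverse of the isomorphism in Theorem~\ref{Fclass}. So the setup is correct. Two steps of the verification, however, contain genuine gaps.

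\emph{The boundedness claim (ii) is false.} You assert that $V_n = 0$ for $n \gg 0$, to be derived from polynomial growth of ranks over residue fields. But the exterior algebra $M \mapsto \bigoplus_{n} \Lambda^n(M)$ is an operation $\Mod_{\fg} \to \Mod_{\fg}$ with $V_n$ the sign representation for \emph{every} $n$; the direct sum is still finitely generated because $\Lambda^n(M)=0$ once $n$ exceeds the number of generators of $M$. The correct finiteness condition on $(V_n)$ is the one in Theorem~\ref{mainfull}: for each $d$, only finitely many partitions of length $\leq d$ appear among the $V_n$. Your rank-growth argument cannot see this condition.

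\emph{The passage from free to arbitrary $M$ is not justified.} You propose to compare cokernels along a presentation $R^m \to R^n \to M \to 0$, using ``right-exact behaviour of $F$ \dots\ which one extracts from the base-change axiom''. There is no such extraction: nothing in Definition~\ref{defop} forces $F_R(R^n) \to F_R(M)$ to be surjective for non-split surjections, and neither $F$ nor the Schur side is right exact (already $M \mapsto M^{\otimes 2}$ fails). Surjectivity of $\Phi_M$ can in fact be obtained by Nakayama over residue fields, as in the proof of Theorem~\ref{lin}, but injectivity cannot, and a presentation argument does not help. The paper circumvents this entirely: it passes to the linearization $L_F$, which is \emph{multilinear}, classifies multilinear operations on all of $\Mod_{\fg}$ by Theorem~\ref{multilin} (whose proof for non-free modules uses Nakayama plus the trick of realising $M$ as a direct summand of the square-zero algebra $M \oplus R$), and then recovers $F_R(M)$ from $(L_F)_R(M,\dotsc,M)^{\Sigma_n}$ for arbitrary $M$ via Macdonald's theory of polynomial functors. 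You would need to supply a replacement for this chain of reductions.
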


See Theorem \ref{mainfull} for a description of those sequences $(V_n)$ which are allowed here. For example, finite sequences are allowed. Examples \ref{charp1}, \ref{charp2} and \ref{charp3} show why the  theorem fails in characteristic $p>0$.

A similar result was obtained by Macdonald \cite{M80} (see also \cite[Appendix A in Chapter I]{M79}) who considered \emph{polynomial functors} between categories of finitely generated modules over a \emph{fixed} commutative $\IQ$-algebra $R$, and gave a full classification for polynomial functors on finitely generated \emph{projective} modules. A similar classification therefore also holds for \emph{strict polynomial functors} \cite{BFS97, FS97, K13, T14} over a commutative $\IQ$-algebra $k$, which may be identified with operations $\Mod_{\fg,\proj} \to \Mod_{\fg,\proj}$ over $k$ between categories of finitely generated \emph{projective} modules. The classification in terms of Schur operations does not work for $k=\IZ$, but in this case one can at least calculate the $K_0$-ring of the category of strict polynomial functors \cite[Theorem 8.5]{HKT16}. 

We will adapt several techniques by Macdonald to our situation in order to reduce the classification from arbitrary operations first to \emph{homogeneous} and later to \emph{multilinear operations}; the latter are functors $\Mod_{\fg}(R)^n \to \Mod_{\fg}(R)$ which are $R$-linear in each variable, are defined for all commutative $k$-algebras $R$ simultaneously and behave well with respect to base changes. Their classification is quite simple.

\begin{hal}
Let $k$ be a commutative ring and $n \in \IN$. Then every multilinear operation $\Mod_{\fg}^n \to \Mod_{\fg}$ over $k$ is isomorphic to the operation
\[(M_1,\dotsc,M_n) \mapsto V \otimes_k (M_1 \otimes_R \dotsc \otimes_R M_n),\]
where $V$ is some finitely generated $k$-module.
\end{hal}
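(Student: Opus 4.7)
My plan is to define $V := F_k(k, k, \ldots, k)$, which is a finitely generated $k$-module by the hypothesis on $F$, and to exhibit a natural isomorphism
\[
\alpha_{M_1, \ldots, M_n} \colon V \otimes_k (M_1 \otimes_R \cdots \otimes_R M_n) \iso F_R(M_1, \ldots, M_n).
\]

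First I construct the comparison map. For each tuple $(m_1, \ldots, m_n) \in M_1 \times \cdots \times M_n$, the $R$-linear maps $R \to M_i$, $r \mapsto rm_i$, induce via functoriality a morphism $F_R(R, \ldots, R) \to F_R(M_1, \ldots, M_n)$. Under the base change isomorphism $F_R(R, \ldots, R) \cong V \otimes_k R$, the $R$-linearity of $F_R$ in each slot makes this assignment $R$-multilinear in the $m_i$, so it factors through the tensor product to define the natural transformation $\alpha$.

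For tuples of free modules $(R^{a_1}, \ldots, R^{a_n})$, $R$-linearity in each variable (forcing preservation of finite direct sums) identifies both sides with $V \otimes_k R^{a_1 \cdots a_n}$ and makes $\alpha$ the identity. For general finitely generated tuples, I pick surjections $\phi_i \colon R^{a_i} \twoheadrightarrow M_i$ coming from generators. Naturality of $\alpha$ produces a commutative square whose top arrow $V \otimes_k (\phi_1 \otimes \cdots \otimes \phi_n)$ is surjective (tensor is right exact), whose left vertical is the isomorphism from the free case, whose bottom is $F_R(\phi_1, \ldots, \phi_n)$, and whose right vertical is $\alpha_{M_1, \ldots, M_n}$. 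The iso property for $\alpha$ then reduces to surjectivity of the bottom arrow together with the analogous kernel statement.

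I expect the main obstacle to be showing that $F_R$ is right exact in each variable. This is automatic for quotients by ideals: given $I \subset R$ and finitely generated $M$, the module $F_R(M/IM)$ is naturally an $R/I$-module by $R$-linearity (since $I$ annihilates $\mathrm{id}_{M/IM}$), and base change along $R \to R/I$ yields $F_R(M/IM) \cong F_R(M)/IF_R(M)$, with the structure map being the canonical quotient. To handle arbitrary submodule quotients I would first reduce to the finitely presented case, writing a finitely generated module as a filtered colimit of finitely presented quotients of a fixed free cover and using base change compatibility to see that $F_R$ respects such colimits; then I would handle the finitely presented case by transferring to the universal presentation $\mathrm{coker}(R_0^b \to R_0^a)$ over a polynomial ring $R_0 = k[x_{ij}]$, where the generic cokernel is more tractable and specialization back to $R$ is governed by base change. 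Injectivity of $\alpha$ is handled in parallel by a kernel chase on the same commutative square.
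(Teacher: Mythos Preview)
Your setup --- defining $V \coloneqq F_k(k,\dotsc,k)$, building the comparison map $\alpha$, and verifying it on free modules via additivity --- matches the paper exactly. The gap is in the passage to arbitrary finitely generated modules, where you try to establish right exactness of $F_R$ directly. Neither of your two proposed reductions goes through.

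First, the filtered colimit step: you want to write a finitely generated $M$ as a filtered colimit of finitely presented quotients of a fixed free cover and then ``use base change compatibility to see that $F_R$ respects such colimits.'' But base change concerns ring homomorphisms $R \to S$, not colimits of modules over a fixed $R$; nothing in the definition of an operation forces $F_R$ to preserve filtered colimits, and indeed the codomain $\Mod_{\fg}(R)$ is not closed under them. Second, the generic presentation step: transferring to $M_0 = \mathrm{coker}(R_0^b \to R_0^a)$ over $R_0 = k[x_{ij}]$ does not make the problem easier. Your one concrete tool --- that $F_R(M/IM) \cong F_R(M)/IF_R(M)$ via base change along $R \to R/I$ --- only handles quotients by submodules of the form $IM$, and the image of $R_0^b$ in $R_0^a$ is not of this form once $a>1$. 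You have given no argument for why ``the generic cokernel is more tractable,'' and in fact the same obstruction reappears there.

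The paper avoids right exactness altogether. It argues by induction on $n$, fixing $M_1$ and treating the remaining variables as a multilinear operation over $R$, so everything reduces to the linear case $n=1$. There, surjectivity of $\alpha$ is obtained by taking the cokernel operation $G$, base changing to every residue field $Q(R/\mathfrak p)$ (where the module becomes free, so $G$ vanishes), and applying Nakayama's Lemma to conclude $G_R(M)_{\mathfrak p}=0$ for all $\mathfrak p$; this is the step that genuinely uses finite generation. Injectivity is obtained by a separate trick: $M$ is a direct summand of the $R$-algebra $S = M \oplus R$ (with $M^2 = 0$), and for any $R$-algebra $S$ the unit--counit composite $M \to (M \otimes_R S)|_R \to M$ is the identity, which combined with the base change isomorphism shows $\alpha_R(M)$ is split mono. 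Right exactness then falls out as a \emph{corollary} of the classification, not as an input to it.
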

 
Although our proof uses finiteness assumptions in a crucial way, we conjecture that multilinear operations $\Mod^n \to \Mod$ have the same classification. For this it suffices to consider the case $n=1$, i.e.\ linear operations. The classification of all operations $\Mod \to \Mod$ would be a consequence.

The mentioned description of operations is actually the ``essential surjectivity'' part of the following equivalence of categories, which is our main result; here ``bounded'' refers to a certain finiteness condition introduced in Section \ref{sec:homo}.
 
\begin{hal}
Let $k$ be a commutative $\IQ$-algebra. Then the category of bounded operations $\Mod_{\fg} \to \Mod_{\fg}$ over $k$ is equivalent to the category of finite sequences $(V_n)_{n \in \IN}$ of finitely generated right $k[\Sigma_n]$-modules. A similar classification is true for bounded operations $\Mod_{\fp} \to \Mod_{\fp}$.
\end{hal}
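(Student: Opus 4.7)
The plan is to produce a functor $\Phi$ from the category of finite sequences of finitely generated $k[\Sigma_n]$-modules to the category of bounded operations by sending $(V_n)_n$ to the Schur operation $M \mapsto \bigoplus_n V_n \otimes_{k[\Sigma_n]} M^{\otimes n}$, and to check it is an equivalence. Essential surjectivity of $\Phi$ is the content of the first Theorem stated in the introduction; the finiteness of the sequence on one side should match the boundedness condition on the other. The remaining task is full faithfulness, which I would prove by two successive reductions followed by an application of the multilinear classification.

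The first reduction decomposes operations by homogeneous degree. A bounded operation over a commutative $\IQ$-algebra $k$ admits a canonical splitting $F = \bigoplus_n F^{(n)}$ into homogeneous pieces: base-changing along $k \to k[t]$ and using the scalar $t$ on modules, one extracts the $n$-th homogeneous component as the $t^n$-coefficient in $F(tM)$. Natural transformations automatically respect this grading, so
\[ \Hom(F,G) \cong \prod_n \Hom(F^{(n)},G^{(n)}), \]
and by boundedness the product is finite. This reduces the problem to fixed degree $n$ and to the Schur operation $F^{(n)}(M) = V_n \otimes_{k[\Sigma_n]} M^{\otimes n}$.

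The second reduction polarizes. Evaluating a degree-$n$ homogeneous operation at $M_1 \oplus \dotsb \oplus M_n$ and extracting the multidegree $(1,\dotsc,1)$ summand produces a symmetric multilinear operation $\widetilde{F}^{(n)}$ of $n$ variables; over a $\IQ$-algebra the assignment $F^{(n)} \mapsto \widetilde{F}^{(n)}$ is fully faithful, with inverse given by restriction to the diagonal, divided by the factor $n!$ that is invertible in $k$. Direct computation yields $\widetilde{F}^{(n)}(M_1,\dotsc,M_n) \cong V_n \otimes_k (M_1 \otimes_R \dotsb \otimes_R M_n)$, so the multilinear classification theorem identifies $\Hom(\widetilde{F}^{(n)},\widetilde{G}^{(n)})$ with $\Hom_k(V_n,W_n)$; imposing the additional $\Sigma_n$-equivariance inherent in polarization cuts this down precisely to $\Hom_{k[\Sigma_n]}(V_n,W_n)$, as required.

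The main obstacle I anticipate lies in the careful tracking of the $\Sigma_n$-actions: one must verify that the action implicitly used on $V_n$ in the Schur expression $V_n \otimes_{k[\Sigma_n]} M^{\otimes n}$ coincides with the action on $\widetilde{F}^{(n)}(k,\dotsc,k) \cong V_n$ that arises from permutation of the $n$ arguments, and that polarization and depolarization are mutually inverse \emph{as functors} on categories of operations, not merely on isomorphism classes. Base-change coherence is the tool that propagates each such identification from the value at $R = k$ to arbitrary commutative $k$-algebras $R$, so matching the actions at $R = k$ suffices. The $\Mod_{\fp}$ variant should require no new arguments, since everything is controlled by evaluating operations on finite free modules, which are both finitely generated and finitely presented.
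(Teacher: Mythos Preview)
Your proposal is correct and follows essentially the same route as the paper: decompose into homogeneous components (the paper's Corollary \ref{decomp2} and Remark \ref{bounded2}), linearize/polarize to reduce a degree-$n$ homogeneous operation to a multilinear one with a $\Sigma_n$-symmetry (the paper's Definition \ref{linearization} and Definition \ref{action}), and invoke the multilinear classification (Theorem \ref{multilin}) to land in $k[\Sigma_n]$-modules. The paper organizes the argument slightly differently---rather than proving essential surjectivity and full faithfulness of $\Phi$ separately, it builds the inverse functor $F \mapsto \V_F = (L_F)_k(k,\dotsc,k)$ and exhibits natural isomorphisms $V \iso \V_{\SS_V}$ (Theorem \ref{Vclass}) and $\SS_{\V_F} \iso F$ (Theorem \ref{Fclass}), the latter relying on Macdonald's isomorphism $(L_F)_R(M,\dotsc,M)^{\Sigma_n} \iso F_R(M)$, which is exactly your ``restriction to the diagonal divided by $n!$''; your anticipated obstacle of tracking left versus right $\Sigma_n$-actions is indeed the main bookkeeping point in the paper's proof.
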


See Theorem \ref{mainfull} for a description of the category of all operations $\Mod_{\fg} \to \Mod_{\fg}$, and see Remark \ref{mainn} for a description of operations $\Mod_{\fg}^n \to \Mod_{\fg}$ with $n$ arguments.
 
Our result may be interpreted as follows: Let $\IP = \coprod_{n \geq 0} \Sigma_n$ be the permutation groupoid. Then the category of bounded operations $\Mod_{\fp} \to \Mod_{\fp}$ over $k$ is equivalent to the category $\widehat{\IP}_k^{\fp}$ of finitely presented functors $\IP^{\op} \to \Mod_{\fp}(k)$. This is actually an equivalence of $k$-linear tensor (i.e.\ symmetric monoidal) categories, when $\widehat{\IP}_k^{\fp}$ is equipped with Day convolution. Since the permutation groupoid $\IP$ is the tensor category freely generated by a single object, it follows that $\widehat{\IP}_k^{\fp}$ is the finitely cocomplete $k$-linear tensor category freely generated by an object $X$ \cite[Remark 5.1.14]{B14}; for this reason we denote it by $\Mod_{\fp}(k)[X]$. It follows rather formally from the $2$-categorical Yoneda Lemma that the category of operations which are defined on \emph{all} finitely cocomplete $k$-linear tensor categories simultaneously and behave well with respect to base changes is equivalent to $\Mod_{\fp}(k)[X]$ (cf.\ \cite{BT16}). In this respect, our result says that, if $k$ is a $\IQ$-algebra, bounded operations on finitely presented modules over commutative $k$-algebras are ``universal enough'' to be defined on \emph{arbitrary} finitely cocomplete $k$-linear tensor categories. This is quite surprising. It is also quite interesting that the study of operations on modules is equivalent to the study of representations of symmetric groups.
 
The author's motivation to study operations on modules originates from the contravariant equivalence of $2$-categories between \emph{tensorial} stacks and \emph{stacky} tensor categories (cf.\ \cite[Section 3.4]{B14} and \cite[Section 3.5]{BC14}). It has the following finitely presented analogue (which has the advantage that some set-theoretic issues disappear): To every stack $\X$ over a commutative ring $k$, which we allow to be fibered in categories and also be possibly non-algebraic, we may associate the finitely cocomplete $k$-linear tensor category $\Qcoh_{\fp}(\X)$ of quasi-coherent modules of finite presentation. These are precisely the operations $\X \to \Mod_{\fp}$ over $k$. Conversely, to every finitely cocomplete $k$-linear tensor category $\C$ we may associate the stack $\SPEC_{\fp}(\C)$ over $k$ which maps a commutative $k$-algebra $R$ to the category $\SPEC_{\fp}(\C)(R) \coloneqq \Hom_{fc\otimes/k}(\C,\Mod_{\fp}(R))$ of finitely cocontinuous $k$-linear tensor functors from $\C$ into $\Mod_{\fp}(R)$. This defines a $2$-adjunction
\[\begin{tikzpicture}
\node (A) at (0,0) {$\{\text{stacks over }k\}$};
\node (B) at (7,0) {$\left\{\text{\begin{minipage}{4.1cm}finitely cocomplete $k$-linear tensor categories\end{minipage}}\right\}^{\op}\!\!\!.$};
\draw[->,line width=0.4pt] (1.5,0.2) to (4.45,0.2);
\draw[<-,line width=0.4pt] (1.5,-0.2) to (4.45,-0.2);
\node (C) at (3,0) {$\scriptstyle\perp$};
\node (D) at (3,0.47) {$\Qcoh_{\fp}$};
\node (D) at (3,-0.52) {$\SPEC_{\fp}$};
\end{tikzpicture}\]
We call a stack $\X$ over $k$ \emph{fp-tensorial} if the canonical morphism $\X \to \SPEC_{\fp}(\Qcoh_{\fp}(\X))$ is an equivalence, and a finitely cocomplete $k$-linear tensor category $\C$ is called \emph{fp-stacky} if the canonical morphism $\C \to \Qcoh_{\fp}(\SPEC_{\fp}(\C))$ is an equivalence. Thus, there is a contravariant equivalence of $2$-categories between fp-tensorial stacks over $k$ and fp-stacky finitely cocomplete $k$-linear tensor categories. Tensorial schemes and stacks have been investigated in \cite{B14,BC14,HR14}. It is natural to ask if the tensor category $\Mod_{\fp}(k)[X]$, the ``polynomial $2$-ring in one variable over $k$'' (cf.\ \cite{CJF13}), is fp-stacky. Its universal property implies $\SPEC_{\fp}(\Mod_{\fp}[X]) \simeq \Mod_{\fp}$. Therefore, $\Qcoh_{\fp}(\SPEC_{\fp}(\Mod_{\fp}(k)[X]))$ identifies with the category of operations $\Mod_{\fp} \to \Mod_{\fp}$ over $k$. The canonical tensor functor from $\Mod_{\fp}(k)[X]$ associates to every finite sequence of finitely presented $k[\Sigma_n]$-modules $V_n$ the corresponding Schur operation
\[M \mapsto \bigoplus_{n \in \IN} V_n \otimes_{k[\Sigma_n]} M^{\otimes n}.\]
Therefore, our result says that, if $k$ is a $\IQ$-algebra, this functor is fully faithful and that its essential image consists of bounded operations. In particular, $\Mod_{\fp}(k)[X]$ is not fp-stacky, since for example $\bigoplus_{n \in \IN} \Lambda^n$ is an operation on $\Mod_{\fp}$ which is not bounded. We may say that $\Mod_{\fp}(k)[X]$ is approximately fp-stacky.

These issues might disappear in the setting of arbitrary, not necessarily finitely presented, modules. However, we could not prove the classification of linear operations in this generality so far. We conjecture that $\Mod(k)[X]=\widehat{\IP}_k$ is stacky (from which it would follow that the stack $\Mod$ is tensorial), i.e., that it is equivalent to the category of operations $\Mod \to \Mod$ over $k$ via Schur operations. More generally, $\Mod(k)[X_1,\dotsc,X_n]$ should be stacky and hence be equivalent to the category of operations $\Mod^n \to \Mod$ over $k$.

Another interesting problem is to describe the category of operations $\CAlg \to \Mod$ from commutative algebras to modules. Every functor $V : \FinSet^{\op} \to \Mod(k)$, i.e.\ augmented symmetric simplicial $k$-module, induces such an operation via the coend
\[A \mapsto \int^{X \in \FinSet} V(X) \otimes_k A^{\otimes X}.\]
We may ask if this defines an equivalence $[\FinSet^{\op},\Mod(k)] \iso [\CAlg,\Mod]$. Since $[\FinSet^{\op},\Mod(k)]$ is the cocomplete $k$-linear tensor category freely generated by a commutative algebra object \cite{G01}, this question asks if $[\FinSet^{\op},\Mod(k)]$ is stacky.

\section*{Organization of the paper}
 
The paper is organized as follows: In Section \ref{sec:op} we give the basic definitions concerning operations. In Section \ref{sec:multi} we classify linear and multilinear operations. In Section \ref{sec:homo} we show that every operation decomposes uniquely into homogeneous operations. In Section \ref{sec:red} we discuss the linearization of a homogeneous operation and use it to prove our main theorems. In Section \ref{sec:con} we give a constructive proof of the classification of linear operations.
 
\section*{Acknowledgements}
I am very grateful to Alexandru Chirvasitu, who made major contributions to this paper in its early stages, particularly to Section \ref{sec:homo}. I also would like to thank Ingo Blechschmidt for sharing his beautiful constructive proofs appearing in Section \ref{sec:con}, Oskar Braun for his careful proofreading of the preprint, Bernhard Köck for drawing my attention to \cite{HKT16}, as well as Antoine Touz\'{e} and Ivo Dell'Ambrogio for helpful discussions on the subject at the University of Lille. Finally I would like to thank the anonymous referee for making several suggestions for improvement.
 

\section{Operations} \label{sec:op}

If $R$ is a ring, we will denote by $\Mod(R)$ the category of right $R$-modules. The full subcategories of finitely generated resp.\ finitely presented right $R$-modules are denoted by $\Mod_{\fg}(R)$ resp.\ $\Mod_{\fp}(R)$. As usually, when $R$ is commutative, we will not always distinguish between left and right modules.
  
\begin{defi} \label{defop}
Let $k$ be a commutative ring. An \emph{operation} $\Mod \to \Mod$ over $k$ is a family of functors
\[F_R : \Mod(R) \to \Mod(R),\]
defined for every commutative $k$-algebra $R$, equipped with isomorphisms of $S$-modules
\[\theta_{f}(M) : F_R(M) \otimes_R S \iso F_S(M \otimes_R S),\]
for each $k$-homomorphism $f : R \to S$, natural in $M \in \Mod(R)$, such that the following two coherence conditions are satisfied: the diagram
\[\begin{tikzcd}[row sep=25pt]
F_R(M) \otimes_R R \ar{rr}{\theta_{\id_R}(M)} && F_R(M \otimes_R R)  \\ & F_R(M) \ar{ur}[swap]{\cong} \ar{ul}{\cong} & 
\end{tikzcd}\]
commutes, and for all $k$-homomorphisms $f: R \to S$, $g: S \to T$ the diagram
\[\begin{tikzcd}[column sep=55pt,row sep=25pt]
(F_R(M) \otimes_R S) \otimes_S T \ar{r}{\theta_f(M) \otimes_S T} \ar{d}[swap]{\cong} & F_S(M \otimes_R S) \otimes_S T \ar{r}{\theta_g(M \otimes_R S)} & F_T((M \otimes_R S) \otimes_S T) \ar{d}{\cong} \\
F_R(M) \otimes_R T \ar{rr}{\theta_{g \circ f}(M)}  && F_T(M \otimes_R T)
\end{tikzcd}\]
commutes.

A \emph{morphism of operations} $(F,\theta) \to (F',\theta')$ is defined as a family of morphisms of functors $\alpha_R : F_R \to F'_R$, defined for every commutative $k$-algebra $R$, such that for all $k$-homomorphisms $f: R \to S$ and all $R$-modules $M$ the diagram
\[\begin{tikzcd}[column sep = 40pt,row sep=25pt]
F_R(M) \otimes_R S \ar{r}{\theta_f(M)} \ar{d}[swap]{\alpha_R(M) \otimes S} & F_S(M \otimes_R S) \ar{d}{\alpha_S(M \otimes_R S)} \\
F'_R(M) \otimes_R S \ar{r}{\theta'_f(M)} & F'_S(M \otimes_R S)
\end{tikzcd}\]
commutes.

Usually we will suppress the isomorphisms $\theta_f$ from the notation. Operations of the form $\Mod_{\fg} \to \Mod_{\fg}$ and $\Mod_{\fp} \to \Mod_{\fp}$ over $k$ are defined in a similar way. For fixed $n \in \IN$, operations with $n$ arguments $F : \Mod^n \to \Mod$ are defined in a similar way, including the variants with $\Mod_{\fg}$ and $\Mod_{\fp}$. Here, we require natural isomorphisms
\[F_R(M_1,\dotsc,M_n) \otimes_R S \iso F_S(M_1 \otimes_R S,\dotsc,M_n \otimes_R S).\]
\end{defi}

\begin{ex}
Let $n \in \IN$. The most basic example of an operation is the family of functors
\[M \mapsto M^{\otimes_R \, n} \coloneqq M \otimes_R \dots \otimes_R M,\]
equipped with the canonical isomorphisms
\[M^{\otimes_R \,  n} \otimes_R S \iso (M \otimes_R S)^{\otimes_S \,  n},\]
mapping $(m_1 \otimes \dots \otimes m_n) \otimes s$ to $(m_1 \otimes 1) \otimes \dots \otimes (m_n \otimes 1) \cdot s$. For $n=0$ this is the constant operation $M \mapsto R$, and for $n=1$ the identity operation $M \mapsto M$. The most basic example of an operation with two arguments is the tensor product $(M,N) \mapsto M \otimes_R N$ equipped with the canonical isomorphisms $(M \otimes_R N) \otimes_R S \iso (M \otimes_R S) \otimes_S (N \otimes_R S)$.
\end{ex}

\begin{ex}
If $I$ is any set, then the direct sum $M \mapsto M^{\oplus I}$ defines an operation over $k$, but the direct product $M \mapsto M^I$ does not unless $k$ is the zero ring or $I$ is finite.
\end{ex}

The next three examples illustrate why our classification result requires that $k$ is a commutative $\IQ$-algebra.

\begin{ex} \label{charp1}
If $k$ is any commutative ring, then the second exterior power
\[M \mapsto \Lambda^2(M) = M^{\otimes 2} / \langle m \otimes m : m \in M \rangle\]
defines an operation over $k$. If $2$ is invertible in $k$, then we have
\[\Lambda^2(M) = M^{\otimes 2} / \langle m \otimes n + n \otimes m : m,n \in M \rangle \cong V \otimes_{k[\Sigma_2]} M^{\otimes 2},\]
where $V$ denotes the alternating representation of $\Sigma_2$ of rank $1$. However, when $k$ is a field of characteristic $2$, we cannot find a representation $V$ of $\Sigma_2$ such that there is an isomorphism of operations $\Lambda^2(M) \cong V \otimes_{k[\Sigma_2]} M^{\otimes 2}$. This also reflects the observation that we cannot define $\Lambda^2$ in arbitrary finitely cocomplete symmetric monoidal $k$-linear categories unless $2$ is invertible in $k$ \cite[Remark 4.4.7]{B14}.
\end{ex}

\begin{ex} \label{charp2}
If $k$ is any commutative ring, then the divided power algebra \cite[Chapitre III]{R63} $M \mapsto \Gamma(M)$ as well as its homogeneous parts $M \mapsto \Gamma_d(M)$ provide  operations over $k$; the base change isomorphisms are constructed in \cite[Th\'{e}or\`{e}me III.3]{R63}. If $k$ is a commutative $\IQ$-algebra, then there is an isomorphism of operations $\Gamma_d \cong \Sym^d$, but this is false if $k$ is a field of characteristic $p>0$.
\end{ex}

\begin{ex} \label{charp3}
Assume that $k$ is a commutative $\IF_p$-algebra. Then we can define an operation over $k$ by mapping an $R$-module $M$ to $M \otimes_{R,\phi_R} R$, where $\phi_R : R \to R$ is the Frobenius homomorphism. For a fixed $k$-algebra $R$ this is usually called the Frobenius twist \cite[Section 1]{FS97}. The base change isomorphisms for $f : R \to S$ are given by
\[(M \otimes_{R,\phi_R} R) \otimes_R S \iso M \otimes_{R,f \circ \phi_R} S = M \otimes_{R,\phi_S \circ f} S \iso (M \otimes_R S) \otimes_{S,\phi_S} S.\]
\end{ex}
 
\begin{rem}\label{onR}
If $F : \Mod \to \Mod$ is an operation over $k$ and $R$ is a commutative $k$-algebra, then we have
\[F_R(R) \cong F_R(k \otimes_k R) \cong F_k(k) \otimes_k R.\]
If $f \in R$ and $M$ is some $R$-module, then we have
\[F_R(M)[f^{-1}] \cong F_{R[f^{-1}]}\bigl(M[f^{-1}]\bigr).\]
If $G$ is a commutative monoid, then we use the notation $M[G] \coloneqq M \otimes_R R[G]$, and we have
\[F_R(M)[G] \cong F_{R[G]}\bigl(M[G]\bigr).\]
For the special case $G=\IN^d$ this becomes
\[F_R(M)[T_1,\dotsc,T_d] \cong F_{R[T_1,\dotsc,T_d]}\bigl(M[T_1,\dotsc,T_d]\bigr).\]
\end{rem}

\begin{rem}
We may rephrase the definition of an operation using $2$-categorical language \cite{B67} as follows. There is a pseudofunctor
\[\Mod : \CAlg(k) \to \CAT\]
which associates to every commutative $k$-algebra $R$ its category of $R$-modules $\Mod(R)$ and to every $k$-homomorphism $R \to S$ the base change functor $\square \otimes_R S$ equipped with the usual coherence isomorphisms; $\square$ denotes a placeholder. Here, $\CAT$ denotes the ``$2$-category'' of categories; one may use universes in order to make this precise. Then an operation $\Mod \to \Mod$ is just a pseudonatural transformation $\Mod \to \Mod$, and a morphism of operations is just a modification between them. A similar statement holds for operations $\Mod^n \to \Mod$ and the variants $\Mod_{\fg}$ and $\Mod_{\fp}$.
\end{rem}

\begin{rem}
In algebraic geometry, pseudonatural transformations $\X \to \Mod$ are sometimes called quasi-coherent modules on $\X$, especially when $\X$ is a stack (say, in the \'{e}tale topology on the category of affine $k$-schemes). If the target is $\Mod_{\fg}$ or $\Mod_{\fp}$, these quasi-coherent modules are called of finite type resp.\ of finite presentation. The pseudofunctors $\Mod$, $\Mod_{\fg}$ and $\Mod_{\fp}$ are stacks by descent theory \cite{V05}, and operations on them are just quasi-coherent modules (of finite type, resp.\ of finite presentation) on themselves.
\end{rem}

\begin{rem}
Up to size issues, there is a ``category'' of operations over $k$, which we will denote by $[\Mod,\Mod]$. As with every category of quasi-coherent modules, this is actually a small-cocomplete symmetric monoidal $k$-linear category (which includes, by definition, that $\otimes$ is cocontinuous and $k$-linear in each variable). Colimits and tensor products are computed pointwise:
\begin{align*}
(\colim_i F_i)_R(M) &= \colim_i \bigl((F_i)_R(M)\bigr),\\
1_R(M) &= R,\\
(F \otimes G)_R(M) &= F_R(M) \otimes F_R(M).
\end{align*}
Operations have an additional structure, given by composition:
\[(F \circ G)_R(M) = F_R(G_R(M)).\]
This defines another monoidal structure on $[\Mod,\Mod]$, which is however not symmetric and not cocomplete. We remark that the corresponding monoidal structure on the category of linear species $[\IP^{\op},\Mod(k)]$ is the substitution product \cite[Chapitre 4]{J86}. The ``category'' $[\Mod^n,\Mod]$ of operations $\Mod^n \to \Mod$ is also a small-cocomplete symmetric monoidal $k$-linear category, but it has no composition when $n>1$. The ``categories'' $[\Mod_{\fg}^n,\Mod_{\fg}]$ and $[\Mod_{\fp}^n,\Mod_{\fp}]$ are defined similarly.
\end{rem}

\begin{rem}
If is not clear a priori if $[\Mod,\Mod]$ is small-complete, and if so how the limits are computed. It is even less clear if $[\Mod,\Mod]$ is an abelian category.
\end{rem}

\begin{rem}
If $\Mod_{\fg,\proj} : \CAlg(k) \to \CAT$ denotes the pseudofunctor of finitely generated projective modules, there is a restriction functor
\[[\Mod_{\fg},\Mod_{\fg}] \to [\Mod_{\fg,\proj},\Mod_{\fg}].\]
It has no a priori reason to be fully faithful, let alone to be an equivalence. The category of strict polynomial functors by Friedlander and Suslin may be identified with $[\Mod_{\fg,\proj},\Mod_{\fg,\proj}]$ by using \cite[Proposition 2.5]{BFS97}. Some authors \cite{K13,T14} allow the codomain to be $\Mod$, but the domain in the theory of strict polynomial functors has always been $\Mod_{\fg,\proj}$. One can use \cite[Theorem 2.14]{B00} to define an extension functor
\[[\Mod_{\proj},\Mod] \to [\Mod,\Mod],\]
which again has no a priori reason to be an equivalence. Therefore the category of operations is a priori just a variant of the category of strict polynomial functors, and a description of one category does not directly imply a description of the other one.
\end{rem}

There is another difference between operations and strict polynomial functors.

\begin{rem} \label{enrich}
The components $F_R : \Mod(R) \to \Mod(R)$ of an operation $F$ admit an enrichment in the cartesian monoidal category of functors $[\CAlg(R),\Set]$ as follows: We may view $\Mod(R)$ as a category enriched in $[\CAlg(R),\Set]$ by defining, for every pair of $R$-modules $(M,N)$, the functor $\HOM_R(M,N) : \CAlg(R) \to \Set$ on objects $S$ by
\[\HOM_R(M,N)(S) \coloneqq \Hom_S(M \otimes_R S,N \otimes_R S).\]
The map of sets $F_R : \Hom_R(M,N) \to \Hom_R(F_R(M),F_R(N))$ extends to a natural transformation $\HOM_R(M,N) \to \HOM_R(F_R(M),F_R(N))$ via mapping an $S$-linear map $f : M \otimes_R S \to N \otimes_R S$ to the $S$-linear map
\[F_R(M) \otimes_R S \iso F_S(M \otimes_R S) \xrightarrow{F_S(f)} F_S(N \otimes_R S) \iso F_R(N) \otimes_R S.\]
This extra structure of $F_R$ will be very useful in Section \ref{sec:homo}. It is similar, but not identical to the enrichment in the definition of a strict polynomial functor \cite{T14}: Here one requires maps of sets $\Hom_R(M,N) \otimes_R S \to \Hom_R(F_R(M),F_R(N)) \otimes_R S$ which are natural in $S$, i.e.\ that $\Hom_R(M,N) \to \Hom_R(F_R(M),F_R(N))$ is a polynomial rule (``lois polynomes'') in the sense of Roby \cite{R63}. The canonical map
\[\Hom_R(M,N) \otimes_R S \to \Hom_S(M \otimes_R S,N \otimes_R S)\]
is bijective if $M$ is finitely generated projective, but in general it is neither injective nor surjective.
\end{rem}
 
 
\section{Linear and multilinear operations} \label{sec:multi}
  
Linear and multilinear operations provide a basic class of operations which we are going to classify first.
  
\begin{defi}\label{linear-def}
Let $k$ be a commutative ring. An operation $F : \Mod_{\fg} \to \Mod_{\fg}$ over $k$ is called \emph{linear} if for every commutative $k$-algebra $R$ the functor
\[F_R : \Mod_{\fg}(R) \to \Mod_{\fg}(R)\]
is $R$-linear. This defines a full subcategory
\[[\Mod_{\fg},\Mod_{\fg}]_1  \subseteq [\Mod_{\fg},\Mod_{\fg}].\]
It contains the identity operation and is closed under composition, therefore may be regarded as a monoidal $k$-linear category. The category $[\Mod_{\fp},\Mod_{\fp}]_1$ is defined in a similar way.
\end{defi}
 
\begin{ex}
Let $V$ be some finitely generated $k$-module. Then $M \mapsto V \otimes_k M$ becomes a linear operation using the natural isomorphisms
\[ (V \otimes_k M) \otimes_R S \iso V \otimes_k (M \otimes_R S)\]
for $k$-homomorphisms $R \to S$. In fact, this construction induces a $k$-linear functor
\[\Mod_{\fg}(k) \to [\Mod_{\fg},\Mod_{\fg}]_1, \quad V \mapsto V \otimes_k \square.\]
We may equip this functor with the structure of a monoidal functor by using the natural isomorphisms $k \otimes_k \square \iso \square$ and $(V \otimes_k W) \otimes_k \square \iso V \otimes_k (W \otimes_k \square)$.
\end{ex}

\begin{thm}\label{lin}
The monoidal functor constructed above yields an equivalence of mon\-oidal $k$-linear categories
\[\Mod_{\fg}(k) \simeq [\Mod_{\fg},\Mod_{\fg}]_1.\]
The same construction yields
\[\Mod_{\fp}(k) \simeq [\Mod_{\fp},\Mod_{\fp}]_1.\]
\end{thm}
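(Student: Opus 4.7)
The natural candidate quasi-inverse is $\Psi \colon F \mapsto F_k(k)$, and $\Psi \circ \Phi \cong \id$ via the canonical iso $V \otimes_k k \cong V$. The functor $\Phi$ is fully faithful: any morphism of operations $\beta \colon V \otimes_k \square \to W \otimes_k \square$ is determined by its component $\beta_k(k) \colon V \to W$, because base-change compatibility forces $\beta_R(R) = \beta_k(k) \otimes_k R$, additivity gives $\beta_R(R^n)$, and naturality with respect to surjections $R^n \twoheadrightarrow M$ extends $\beta$ to all of $\Mod_{\fg}(R)$.

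For essential surjectivity, let $F$ be a linear operation and put $V := F_k(k)$. I would construct a natural morphism of operations $\alpha_F \colon V \otimes_k \square \to F$ by
\[\alpha_R(M)(v \otimes m) := F_R(\tilde m)(v \otimes 1),\]
where $\tilde m \colon R \to M,\ r \mapsto rm$ is the $R$-linear map associated to $m$, and $v \otimes 1 \in V \otimes_k R$ is identified with an element of $F_R(R)$ via the base-change iso along $k \to R$. $R$-linearity of $\alpha_R(M)$ in $m$ follows from the $R$-linearity of $F_R$ on hom-sets; naturality in $M$ and compatibility with base change are direct checks from the definitions. At $M = R^n$, the component $\alpha_R(R^n)$ is a direct sum of base-change isos, hence iso.

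Surjectivity of $\alpha_R(M)$ for general $M \in \Mod_{\fg}(R)$ follows from a Nakayama argument applied to the cokernel $C := \mathrm{coker}(\alpha_R(M))$, which is finitely generated over $R$. For any maximal ideal $\mathfrak{m} \subseteq R$, base-change compatibility of $\alpha$ identifies $\alpha_R(M) \otimes_R R/\mathfrak{m}$ with $\alpha_{R/\mathfrak{m}}(M/\mathfrak{m}M)$, which is iso by the free case since $M/\mathfrak{m}M$ is a finite-dimensional $R/\mathfrak{m}$-vector space. Hence $C_\mathfrak{m} = 0$ for every maximal $\mathfrak{m}$ by the standard Nakayama lemma applied to the localization, and therefore $C = 0$.

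The main obstacle is injectivity. For $\Mod_{\fp}$ this amounts --- via a presentation $R^m \xrightarrow{A} R^n \xrightarrow{p} M \to 0$ and the identification $F_R(A) = \id_V \otimes_k A$ derived from the free case --- to showing $F_R$ is right exact on this presentation, i.e.\ $\ker F_R(p) = \mathrm{im}\, F_R(A)$. I would approach this by reducing via base change to the universal model $R^{\mathrm{univ}} := k[X_{ij}]$, $M^{\mathrm{univ}} := \mathrm{coker}((X_{ij}))$, and exploiting the $\IN^{mn}$-grading of $R^{\mathrm{univ}}$ together with the base-change evaluation at the maximal ideal $(X_{ij})$ to pin down $F_{R^{\mathrm{univ}}}(M^{\mathrm{univ}})$ precisely. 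The $\Mod_{\fg}$ case would follow from the $\Mod_{\fp}$ case by writing any f.g.\ $M = R^n/K$ as a filtered colimit $\colim R^n/K_i$ over f.g.\ submodules $K_i \subseteq K$ and using the compatibility of $\alpha$ with these approximations. The monoidal compatibility of $\Phi$ is immediate from associativity of $\otimes_k$.
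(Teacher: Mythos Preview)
Your construction of $\alpha$ and the surjectivity argument via Nakayama are essentially what the paper does (the paper localizes at primes and passes to $Q(R/\mathfrak{p})$, you go to $R/\mathfrak{m}$; both work). The real issue is injectivity, where your proposal remains a sketch and the suggested route has genuine problems.

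First, the reduction from $\Mod_{\fg}$ to $\Mod_{\fp}$ via filtered colimits does not go through: you have no control over $\colim_i F_R(R^n/K_i) \to F_R(M)$, since nothing in the definition of an operation forces $F_R$ to preserve filtered colimits. Second, for $\Mod_{\fp}$ your ``universal model'' idea is not fleshed out; reducing to $k[X_{ij}]$ is fine, but the step where you ``exploit the $\IN^{mn}$-grading together with evaluation at $(X_{ij})$ to pin down $F_{R^{\mathrm{univ}}}(M^{\mathrm{univ}})$'' is exactly the hard part, and you have not indicated how it would actually determine $\ker F_R(p)$. (Note also that the grading machinery you would need is developed in the paper only \emph{after} this theorem.)

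The paper's injectivity argument avoids all of this with a short trick that you are missing: $M$ is a direct summand of $M \oplus R$, and $M \oplus R$ carries the structure of a commutative $R$-algebra $S$ (the square-zero extension). More generally, if $N$ is a good $S$-module for some commutative $R$-algebra $S$, then the restriction $N|_R$ is a good $R$-module: one writes down the square
\[
\begin{tikzcd}
V \otimes_k N|_R \ar{r} \ar{d}{\alpha} & (V \otimes_k N)|_R \ar{d}{\alpha}[swap]{\cong} \\
F_R(N|_R) \ar{r} & F_S(N)|_R
\end{tikzcd}
\]
whose top row is the identity, so $\alpha_R(N|_R)$ is a split monomorphism. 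Applying this with $N = S = M \oplus R$ (which is free over itself, hence good) shows $M \oplus R$ is good over $R$, and then $M$ is good as a direct summand. This handles $\Mod_{\fg}$ and $\Mod_{\fp}$ simultaneously without any right-exactness or colimit argument.
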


\begin{proof}
We construct an explicit pseudo-inverse functor. Let $F : \Mod_{\fg} \to \Mod_{\fg}$ be a linear operation over $k$. We associate to it the finitely generated $k$-module $V \coloneqq F_k(k)$. If $M$ is some finitely generated $R$-module, then there is a natural $R$-linear map
\begin{align*}
M \iso \Hom_R(R,M) & \xrightarrow{\phantom{\,\sim\,}} \Hom_R(F_R(R),F_R(M)) \\
 & \iso   \Hom_R(V \otimes_k R,F_R(M))  \iso \Hom_{k}(V,F_R(M)|_k),
\end{align*}
which corresponds to a natural $R$-linear map
\[\alpha_R(M) : V \otimes_k M \to F_R(M).\]
This is, in fact, a morphism of linear operations $\alpha : V \otimes_k \square \to F$, as can be checked from the coherence condition in Definition \ref{defop} applied to $k \to R \to S$. We have to show that it is an isomorphism. By Remark \ref{onR}, it is an isomorphism when $M=R$. Since linear operations are additive, it is also an isomorphism when $M$ is a finitely generated free $R$-module.

We first show that $\alpha_R(M)$ is an epimorphism for every $R$-module $M$. By taking the cokernel of $\alpha$, which is a linear operation again, it suffices to prove the following: If $G$ is a linear operation which vanishes on finitely generated free modules, then $G=0$. Take any finitely generated $R$-module $M$. If $\p$ is any prime ideal of $R$, we have
\[G_{Q(R/\p)}(M \otimes_R Q(R/\p))=0\]
since $Q(R/\p)$ is a field and therefore $M \otimes_R Q(R/\p)$ is free. It follows that
\[0 = G_R(M) \otimes_R Q(R/\p) \cong G_R(M)_{\p} / \p G_R(M)_{\p}.\]
Nakayama's Lemma implies $G_R(M)_{\p} = 0$. Since $\p$ was arbitrary, this shows $G_R(M)=0$.
 
It remains to prove that $\alpha_R(M) : V \otimes_k M \to F_R(M)$ is injective. If this happens to be the case, let us call $M$ \emph{good}. Since linear functors are additive, direct summands of good modules are good. Since $M$ is a direct summand of $M \oplus R$, which has the structure of a commutative $R$-algebra in which $M$ squares to zero, we may assume that $M$ is the underlying $R$-module of a commutative $R$-algebra $S$. More generally, we assume that $M=N|_R$ is the underlying $R$-module of some good $S$-module $N$, where $S$ is a commutative $R$-algebra. Consider the following commutative diagram:
\[\begin{tikzcd}[row sep=30pt, column sep=34pt]
V \otimes_k M \ar{d}[swap]{\alpha} \ar{r}{\text{unit}} & ((V \otimes_k M) \otimes_R S)|_R \ar{r}{\sim} \ar{d}[swap]{\alpha} & (V \otimes_k (M \otimes_R S))|_R \ar{r}{\text{counit}} \ar{d}[swap]{\alpha} & (V \otimes_k N)|_R \ar{d}[swap]{\alpha} \\
F_R(M) \ar{r}{\text{unit}} & (F_R(M) \otimes_R S)|_R \ar{r}{\sim} & F_S(M \otimes_R S)|_R \ar{r}{\text{counit}} & F_S(N)|_R
\end{tikzcd}\]
Here unit and counit refer to the adjunction between extending and restricting scalars. The composition $V \otimes_k M \to (V \otimes_k N)|_R$ is the identity, and $\alpha : (V \otimes_k N)|_R \to F_S(N)|_R$ is an isomorphism since $N$ is a good $S$-module. Hence, the diagram implies that $\alpha : V \otimes_k M \to F_R(M)$ is a split monomorphism. Therefore, $M$ is a good $R$-module.
\end{proof}

\begin{rem}
The usage of Nakayama's Lemma in the proof above is the only reason why we have restricted ourselves to finitely generated modules.
\end{rem}

\begin{rem}
One might wonder if there is a constructive proof of Theorem \ref{lin}, which does neither use the existence of prime ideals nor the law of the excluded middle. This is indeed possible and will be shown in Section \ref{sec:con}.
\end{rem}

\begin{defi}\label{multilindef}
Let $k$ be a commutative ring and $n \in \IN$. An operation with $n$ arguments $F : \Mod_{\fg}^n \to \Mod_{\fg}$ will be called \emph{multilinear} if it is linear in each variable. This means that for every index $1 \leq i \leq n$ and every family of finitely generated $R$-modules $M_1,\dotsc,M_{i-1},M_{i+1},\dotsc,M_n$ the functor
\[F_R(M_1,\dotsc,M_{i-1},\square,M_{i+1},\dotsc,M_n) : \Mod_{\fg}(R) \to \Mod_{\fg}(R)\]
is $R$-linear. This defines a full subcategory
\[ [\Mod_{\fg}^n,\Mod_{\fg}]_{1,\dotsc,1} \subseteq [\Mod_{\fg}^n,\Mod_{\fg}].\]
For $n=1$ we recover $[\Mod_{\fg},\Mod_{\fg}]_1$.
\end{defi}

\begin{ex}
Let $V$ be some finitely generated $k$-module. Then
\[\Mod_{\fg}(R)^n \to \Mod_{\fg}(R), \quad (M_1,\dotsc,M_n) \mapsto V \otimes_k (M_1 \otimes_R \dots \otimes_R M_n)\]
becomes a multilinear operation using the natural isomorphisms
\[ (V \otimes_k (M_1 \otimes_R \dots \otimes_R M_n)) \otimes_R S \iso V \otimes_k ((M_1 \otimes_R S) \otimes_S \dots \otimes_S (M_1 \otimes_R S)).\]
In fact, this induces a functor $\Mod_{\fg}(k) \to [\Mod_{\fg}^n,\Mod_{\fg}]_{1,\dotsc,1}$.
\end{ex}

\begin{thm} \label{multilin}
The functor constructed above yields an equivalence of categories
\[\Mod_{\fg}(k) \simeq [\Mod_{\fg}^n,\Mod_{\fg}]_{1,\dotsc,1}.\]
The same construction yields
\[\Mod_{\fp}(k) \simeq [\Mod_{\fp}^n,\Mod_{\fp}]_{1,\dotsc,1}.\]
\end{thm}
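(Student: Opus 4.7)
The plan is to imitate the proof of Theorem \ref{lin} and reduce to the linear case by freezing all but one variable. Given a multilinear operation $F$ with $n$ arguments over $k$, I set $V \coloneqq F_k(k,\dotsc,k) \in \Mod_{\fg}(k)$ (respectively $\Mod_{\fp}(k)$). I would construct a morphism of operations
\[\alpha_R(M_1,\dotsc,M_n) : V \otimes_k (M_1 \otimes_R \dotsc \otimes_R M_n) \to F_R(M_1,\dotsc,M_n)\]
in analogy with the proof of Theorem \ref{lin}: an element $m_i \in M_i$ is represented by an $R$-linear map $R \to M_i$, so functoriality and $R$-multilinearity of $F_R$ produce an $R$-linear map $F_R(R,\dotsc,R) \to F_R(M_1,\dotsc,M_n)$ that is multilinear in the $m_i$; combining with the base-change isomorphism $F_R(R,\dotsc,R) \cong V \otimes_k R$ from Remark \ref{onR} then factors this through $V \otimes_k (M_1 \otimes_R \dotsc \otimes_R M_n)$ and yields $\alpha$. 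That $\alpha$ really is a morphism of operations is a routine coherence check against Definition \ref{defop}.

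To prove $\alpha$ is an isomorphism, I would induct on $n$. The base case $n=1$ is Theorem \ref{lin}, and the degenerate case $n=0$ is Remark \ref{onR}. For the inductive step with $n \geq 2$, fix a commutative $k$-algebra $R$ together with modules $M_2,\dotsc,M_n \in \Mod_{\fg}(R)$, and consider the assignment
\[G_{R'}(M) \coloneqq F_{R'}(M, M_2 \otimes_R R', \dotsc, M_n \otimes_R R')\]
on commutative $R$-algebras $R'$, with the base-change data inherited from $F$. This is a linear operation over $R$, so Theorem \ref{lin} (applied with base ring $R$ instead of $k$) gives an isomorphism $G_R(M_1) \cong G_R(R) \otimes_R M_1$, i.e.\
\[F_R(M_1,\dotsc,M_n) \cong F_R(R, M_2, \dotsc, M_n) \otimes_R M_1.\]
The remaining factor $(M_2,\dotsc,M_n) \mapsto F(R, M_2,\dotsc,M_n)$ is itself a multilinear operation in $n-1$ arguments over $k$, taking the value $V$ at $(k,\dotsc,k)$ over $k$; by the inductive hypothesis it is isomorphic to $V \otimes_k (M_2 \otimes_R \dotsc \otimes_R M_n)$. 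Composing identifies $F$ with the target of $\alpha$, and a short diagram chase confirms that the composite agrees with $\alpha$ itself.

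For fully faithfulness, a morphism of operations from $V \otimes_k (\square \otimes_R \dotsc \otimes_R \square)$ to $W \otimes_k (\square \otimes_R \dotsc \otimes_R \square)$ evaluated at $R=k$ and $M_1=\dotsc=M_n=k$ is a $k$-linear map $V \to W$; using the isomorphisms above together with naturality in the arguments, one verifies that the morphism is determined by this datum and conversely every such map extends. The argument for $\Mod_{\fp}$ is identical since Theorem \ref{lin} covers both cases. The main obstacle I expect is the naturality bookkeeping: first, verifying that the auxiliary operation $G$ really is a linear operation over $R$ with coherent base-change data; and second, that the composite isomorphism obtained from the induction agrees with $\alpha$ as a morphism of operations, not merely as a family of pointwise isomorphisms.
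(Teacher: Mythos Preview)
Your proposal is correct and follows essentially the same strategy as the paper: construct the canonical comparison map $\alpha$, then prove it is an isomorphism by induction on $n$, peeling off one variable via Theorem \ref{lin} and handling the rest by the inductive hypothesis. The only cosmetic difference is the order: the paper freezes $M_1$ first, applies the $(n-1)$-variable induction hypothesis over $R$ to identify $F_R(M_1,M_2,\dotsc,M_n)$ with $F_R(M_1,R,\dotsc,R)\otimes_R(M_2\otimes_R\dotsc\otimes_R M_n)$, and then recognises $M_1\mapsto F_R(M_1,R,\dotsc,R)$ as a linear operation over $k$; you instead freeze $M_2,\dotsc,M_n$, apply Theorem \ref{lin} over $R$ first, and then invoke the induction hypothesis over $k$ on the operation $(N_2,\dotsc,N_n)\mapsto F_S(S,N_2,\dotsc,N_n)$. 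Both orderings work and the bookkeeping you flag (coherence of $G$ as an operation over $R$, agreement of the composite with $\alpha$) is routine in either version.
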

 
\begin{proof}
As in Theorem \ref{lin}, we construct a pseudo-inverse functor by mapping a multilinear operation $F : \Mod_{\fg}^n \to \Mod_{\fg}$ to the finitely generated $k$-module $V \coloneqq F_k(k,\dotsc,k)$, and the natural $R$-linear maps
\begin{align*}
M_1 \otimes_R \dots \otimes_R M_n&  \iso \Hom_R(R,M_1) \otimes_R \dots \otimes_R \Hom_R(R,M_n) \\
 & \pfeil{~} \Hom_R\bigl(F_R(R,\dotsc,R),F_R(M_1,\dotsc,M_n)\bigr) \\
 & \iso \Hom_k(V,F_R(M_1,\dotsc,M_n)|_k
\end{align*}
induce a morphism of multilinear operations
\[V \otimes_k (M_1 \otimes_R \dots \otimes_R M_n) \to F_R(M_1,\dotsc,M_n).\]
It suffices to prove that it is an isomorphism. We will argue by induction on $n$, the case $n=0$ being trivial. Now let us assume $n \geq 1$ and that the theorem is true for $n-1$. Let us fix some commutative $k$-algebra $R$ and some $R$-module $M_1$. Then we may define a multilinear operation $\Mod_{\fg}^{n-1} \to \Mod_{\fg}$ \emph{over $R$} by
\[(M_2,\dotsc,M_n) \mapsto F_S(M_1 \otimes_R S,M_2,\dotsc,M_n)\]
for $S$-modules $M_2,\dotsc,M_n$, where $S$ is a commutative $R$-algebra. By induction hypothesis, the canonical homomorphism
\[F_R(M_1,R,\dotsc,R) \otimes_R (M_2 \otimes_S \dots \otimes_S M_n) \to F_S(M_1 \otimes_R S,M_2,\dotsc,M_n)\]
is an isomorphism. We will only need this for $S=R$. Varying $R$ and $M_1$, we observe that $M_1 \mapsto F_R(M_1,R,\dotsc,R)$ defines a linear operation over $k$. Thus, by Theorem \ref{lin}, the canonical homomorphism
\[V \otimes_k M_1 \to F_R(M_1,R,\dotsc,R)\]
is an isomorphism. We finish the proof by composing this isomorphism with the previous one.
\end{proof}

\begin{rem}
We have shown rather indirectly that multilinear operations are right exact in each variable, and we have found a characterization of tensor products which does not involve right exactness in any way, but rather base change. Notice that the Eilenberg-Watts Theorem \cite{E60} would immediately imply the classification of linear operations $\Mod \to \Mod$ (resp.\ $\Mod_{\fp} \to \Mod_{\fp}$) if we already knew that they consisted of cocontinuous (resp.\ right exact) functors.
\end{rem}


\section{Homogeneous operations} \label{sec:homo}

In this section we will show that every operation $\Mod \to \Mod$ over a commutative ring $k$ decomposes uniquely into homogeneous operations. This is analogous to the homogeneous decomposition of strict polynomial functors \cite[\para 2]{FS97}.

\begin{lemma} \label{comod}
Let $R$ be a commutative $k$-algebra and let $A$ be a commutative $R$-bialgebra. Consider the category of comodules $\CoMod(A)$ with its forgetful functor to $\Mod(R)$. If $F : \Mod \to \Mod$ is an operation over $k$, then the functor $F_R : \Mod(R) \to \Mod(R)$ lifts to a functor $\CoMod(A) \to \CoMod(A)$.
\[\begin{tikzcd} \CoMod(A)  \ar{d} \ar[dashed]{r} & \CoMod(A) \ar{d} \\ \Mod(R) \ar{r}{F_R} & \Mod(R) \end{tikzcd}\]
\end{lemma}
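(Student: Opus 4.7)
My plan is to use the enriched structure on $F_R$ from Remark \ref{enrich}, which turns out to package exactly the coherence data needed to transport coactions. Given an $A$-comodule $(M,\rho)$, the $R$-linear coaction $\rho : M \to M \otimes_R A$ corresponds, via the extension/restriction adjunction along $f : R \to A$, to an $A$-linear endomorphism $\tilde{\rho} \in \End_A(M \otimes_R A) = \HOM_R(M,M)(A)$. The enrichment provides a natural transformation
\[\HOM_R(M,M) \to \HOM_R(F_R(M),F_R(M))\]
of functors $\CAlg(R) \to \Set$; evaluating at $A$ sends $\tilde{\rho}$ to some $\sigma \in \End_A(F_R(M) \otimes_R A)$. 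Its adjoint is an $R$-linear map $\bar{\rho} : F_R(M) \to F_R(M) \otimes_R A$, which I propose as the coaction on $F_R(M)$. Concretely, $\bar{\rho}$ is the composite
\[F_R(M) \xrightarrow{\text{unit}} F_R(M) \otimes_R A \xrightarrow{\theta_f(M)} F_A(M \otimes_R A) \xrightarrow{F_A(\tilde{\rho})} F_A(M \otimes_R A) \xrightarrow{\theta_f(M)^{-1}} F_R(M) \otimes_R A.\]

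The two coaction axioms then fall out from the naturality of the enrichment in $\CAlg(R)$. For the counit, I use that the counit $\epsilon : A \to R$ is an $R$-algebra retraction of $f$, so $\epsilon \circ f = \id_R$. The counit axiom for $\rho$ translates into the statement that the image of $\tilde{\rho}$ under $\HOM_R(M,M)(\epsilon)$ is $\id_M \in \End_R(M)$. Naturality of the enrichment then forces $\HOM_R(F_R(M),F_R(M))(\epsilon)(\sigma) = \id_{F_R(M)}$, which unwinds to $(F_R(M) \otimes \epsilon) \circ \bar{\rho} = \id_{F_R(M)}$. For coassociativity, I apply the same formalism at the commutative $R$-algebra $A \otimes_R A$: the coassociativity of $\rho$ is an equality between two elements of $\End_{A \otimes_R A}(M \otimes_R A \otimes_R A) = \HOM_R(M,M)(A \otimes_R A)$, obtained by pushing $\tilde{\rho}$ along the two $R$-algebra maps $A \to A \otimes_R A$ given by $a \mapsto a \otimes 1$ and by $\Delta$. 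Naturality of the enrichment transports this equality into the coassociativity statement for $\bar{\rho}$.

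Functoriality of the lift is then straightforward: a morphism $g : (M,\rho_M) \to (N,\rho_N)$ in $\CoMod(A)$ is by definition $R$-linear with $\rho_N \circ g = (g \otimes A) \circ \rho_M$, equivalently $\tilde{\rho}_N \circ (g \otimes A) = (g \otimes A) \circ \tilde{\rho}_M$ in $\Hom_A(M \otimes_R A, N \otimes_R A)$. Applying the enriched $F_R$, which is natural in both arguments of $\HOM_R$, yields the analogous identity for $\bar{\rho}_M$, $\bar{\rho}_N$ and $F_R(g)$, so $F_R(g)$ is a morphism of comodules. By construction the underlying $R$-module of the lift is $F_R(M)$, so the square in the statement commutes strictly.

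The main obstacle is the coassociativity verification: it depends on carefully translating the bialgebra coassociativity of $\rho$ into an identity inside the enriched endomorphism functor at $A \otimes_R A$, and on checking that the two base change isomorphisms along $f \otimes 1, f \otimes f : R \to A \otimes_R A$ are compatible through the second coherence condition of Definition \ref{defop} applied to $R \xrightarrow{f} A \xrightarrow{\Delta} A \otimes_R A$. If one unfolds the enrichment, this becomes a somewhat lengthy diagram chase, but the enriched formulation reduces it to pure naturality, which is the main reason to set up the proof this way.
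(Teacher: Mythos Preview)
Your approach is essentially the paper's: both transport the $A$-linear endomorphism $\tilde\rho\in\End_A(M\otimes_R A)$ through the base-change structure of $F$ to obtain an $A$-linear endomorphism of $F_R(M)\otimes_R A$, whose adjoint is the new coaction. The paper phrases this via the Demazure--Gabriel correspondence between $A$-coactions on $M$ and natural families of \emph{monoid} homomorphisms $\Hom_{\CAlg(R)}(A,B)\to\End_B(M\otimes_R B)$; since the enriched map $\End_B(M\otimes_R B)\to\End_B(F_R(M)\otimes_R B)$ from Remark~\ref{enrich} is a monoid homomorphism natural in $B$, the comodule axioms for $F_R(M)$ come for free from the correspondence, with no diagram chase at all. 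Your direct verification is the same content unpacked.

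One correction to your coassociativity sketch: the equality in $\HOM_R(M,M)(A\otimes_R A)$ is not between the pushforwards of $\tilde\rho$ along $\iota_1:a\mapsto a\otimes 1$ and along $\Delta$. Rather, one side is the single pushforward $\HOM_R(M,M)(\Delta)(\tilde\rho)$, and the other side is the \emph{composite} $\HOM_R(M,M)(\iota_1)(\tilde\rho)\circ\HOM_R(M,M)(\iota_2)(\tilde\rho)$ of the pushforwards along both inclusions $\iota_1,\iota_2:A\to A\otimes_R A$. This is exactly the statement that $\alpha_{A\otimes_R A}$ is a monoid homomorphism at the universal pair $(\iota_1,\iota_2)$, whose convolution product is $\Delta$. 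Once stated this way, naturality and multiplicativity of the enriched map $\HOM_R(M,M)\to\HOM_R(F_R(M),F_R(M))$ immediately give the corresponding identity for $F_R(M)$, and your plan goes through.
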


\begin{proof}
Let $M$ be an $R$-module equipped with an $A$-coaction $h : M \to M \otimes_R A$. This coaction corresponds 1:1 to a family of monoid homomorphisms
\[\alpha_B : \Hom_{\CAlg(R)}(A,B) \to \End_{\Mod(B)}(M \otimes_R B)\]
for commutative $R$-algebras $B$ which are natural in $B$ \cite[II, \S 2, n$^\mathrm{o}\,$2]{DG70}. The monoid structure on $\Hom_{\CAlg(R)}(A,B)$ is induced by the commutative bialgebra structure of $A$. Specifically, $\alpha_B(f)$ is defined from $h$ by
\[M \otimes_R B \xrightarrow{\,h \otimes B\,} M \otimes_R A \otimes_R B \xrightarrow{M \otimes f  \otimes B} M \otimes_R B \otimes_R B \xrightarrow{M \otimes \mu_B} M \otimes_R B.\]
Conversely, a family $(\alpha_B)_{B \in \CAlg(R)}$ is mapped to the coaction
\[M  \xrightarrow{\,\eta_A\,} M \otimes_R A \xrightarrow{\alpha_A(\id_A)} M \otimes_R A.\]
Using this description of comodule structures, we obtain an $A$-coaction on $F_R(M)$ using the natural monoid homomorphisms (cf.\ Remark \ref{enrich})
\[ \End_{\Mod(B)}(M \otimes_R B) \to \End_{\Mod(B)}(F_B(M \otimes_R B)) \iso \End_{\Mod(B)}(F_R(M) \otimes_R B).\]
Specifically, the $R$-linear coaction $M \to M \otimes_R A$ corresponds to some $A$-linear map $M \otimes_R A \to M \otimes_R A$, which induces an $A$-linear map $F_R(M) \otimes_R A \to F_R(M) \otimes_R A$ (using the identification $F_R(M) \otimes_R A \cong  F_A(M \otimes_R A)$), which corresponds to some $R$-linear map $F_R(M) \to F_R(M) \otimes_R A$, the $A$-coaction on $F_R(M)$. If another $A$-comodule $h' : M' \to M' \otimes_R A$ is given, one easily checks that for every homomorphism $M \to M'$ of $A$-comodules the induced homomorphism $F_R(M) \to F_R(M')$ is also a homomorphism of $A$-comodules.
\end{proof}

\begin{defi}
If $G$ is a commutative monoid, let us denote by $\gr_G\!\Mod(R)$ the category of $G$-graded $R$-modules. If $M = \bigoplus_{g \in G} M_g$ is a graded $R$-module and $R \to S$ is a $k$-homomorphism, we endow $M \otimes_R S$ with the grading $M \otimes_R S = \bigoplus_{g \in G} M_g \otimes_R S$. This defines a pseudofunctor $\gr_G\!\Mod : \CAlg(k) \to \CAT$ together with a forgetful operation $\gr_G\!\Mod \to \Mod$.
\end{defi}
 
\begin{cor} \label{grad}
Let $G$ be a commutative monoid. Every operation $\Mod \to \Mod$ lifts, along the forgetful operation $\gr_G\!\Mod \to \Mod$, to an operation $\gr_G\!\Mod \to \gr_G\!\Mod$.
\[\begin{tikzcd} \gr_G\!\Mod  \ar{d} \ar[dashed]{r} & \gr_G\!\Mod \ar{d} \\ \Mod \ar{r}{F} & \Mod \end{tikzcd}\]
\end{cor}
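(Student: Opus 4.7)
The plan is to identify $G$-graded $R$-modules with comodules over the commutative $R$-bialgebra $R[G]$ and then invoke Lemma \ref{comod} pointwise, verifying compatibility with base change.

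First, recall that since $G$ is a commutative monoid, the monoid algebra $R[G]$ carries a canonical structure of commutative $R$-bialgebra, with comultiplication $[g] \mapsto [g] \otimes [g]$ and counit $[g] \mapsto 1$. It is a standard observation that the category $\gr_G\!\Mod(R)$ of $G$-graded $R$-modules is equivalent (over $\Mod(R)$) to $\CoMod(R[G])$: a decomposition $M = \bigoplus_{g \in G} M_g$ corresponds to the coaction $h : M \to M \otimes_R R[G]$ defined by $h(m_g) = m_g \otimes [g]$, and conversely, given a coaction $h$, the homogeneous component $M_g$ is recovered as the equalizer of $h$ and $m \mapsto m \otimes [g]$ (or, equivalently, as the image of the projector obtained by averaging $[g]$-components).

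Applying Lemma \ref{comod} with $A = R[G]$, we obtain for every commutative $k$-algebra $R$ a functor $\tilde{F}_R : \CoMod(R[G]) \to \CoMod(R[G])$ lifting $F_R$, which under the identification above gives a functor $\gr_G\!\Mod(R) \to \gr_G\!\Mod(R)$ compatible with the forgetful functor. This supplies the dashed arrow at each fixed $R$.

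It remains to assemble the family $(\tilde{F}_R)_R$ into an operation, i.e.\ to equip it with coherent base change isomorphisms. For a $k$-homomorphism $f : R \to S$, extension of scalars sends an $R[G]$-comodule $(M,h)$ to the $S[G]$-comodule $M \otimes_R S$ with coaction induced from $h$ along $R[G] \to S[G]$, which on the graded side is exactly the componentwise extension $\bigoplus_g M_g \otimes_R S$. The existing isomorphism $\theta_f(M) : F_R(M) \otimes_R S \iso F_S(M \otimes_R S)$ from the operation $F$ is therefore a morphism of $S[G]$-comodules: this follows immediately from the construction of the coaction in the proof of Lemma \ref{comod}, where the coaction on $F_R(M)$ is induced from an endomorphism of $M \otimes_R R[G]$ via the naturality of $\theta$ applied to the $k$-homomorphism $R \to R[G]$ (and analogously $S \to S[G]$), together with the already assumed coherence of $\theta$ with respect to the composite $R \to S \to S[G]$.

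The main (and essentially only) obstacle is bookkeeping: one must chase the naturality diagrams to verify that $\theta_f(M)$ respects the induced gradings and that the two coherence squares in Definition \ref{defop} carry over to the graded lift. Both follow formally from the naturality of the construction in Lemma \ref{comod} together with functoriality of the assignment $R \mapsto R[G]$, so no new computation is really needed beyond unwinding the definitions.
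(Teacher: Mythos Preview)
Your proof is correct and follows essentially the same route as the paper: identify $\gr_G\!\Mod(R)$ with $\CoMod(R[G])$, apply Lemma~\ref{comod} pointwise, and verify that the base change isomorphisms $\theta_f$ respect the induced coactions using the coherence conditions of Definition~\ref{defop} applied to the two factorizations $R \to R[G] \to S[G]$ and $R \to S \to S[G]$. The paper's proof is slightly more explicit about the description of the homogeneous components of $F_R(M)$, but the argument is the same.
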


\begin{proof}
This follows from Lemma \ref{comod} because $\gr_G\!\Mod(R)$ is isomorphic to the category of $R[G]$-comodules \cite[II, \S 2, 2.5]{DG70} when the monoid algebra $R[G]$ is equipped with the counit $g \mapsto 1$ and the comultiplication $g \mapsto g \otimes g$. Specifically, if $M=\bigoplus_{g \in G} M_g$ is $G$-graded, the corresponding coaction is the $R$-linear map
\[\textstyle M \to M[G], \quad \sum_{g \in G} m_g \mapsto \sum_{g \in G} m_g \cdot g.\]
By adjunction this corresponds to an $R[G]$-linear map $M[G] \to M[G]$, which induces an $R[G]$-linear map
\[F_R(M)[G] \to F_R(M)[G].\]
The homogeneous component $F_R(M)_g \subseteq F_R(M)$ of degree $g\in G$ is the submodule consisting of those elements $u \in F_R(M)$ such that $u \cdot 1 \in F_R(M)[G]$ gets mapped to $u \cdot g \in F_R(M)[G]$. If $R \to S$ is a $k$-homomorphism, one has to check that the isomorphism $\theta : F_R(M) \otimes_R S \to F_S(M \otimes_R S)$ preserves the gradings. This follows from the coherence conditions in Definition \ref{defop} applied to $R \to R[G] \to S[G]$ and $R \to S \to S[G]$.
\end{proof}
 
\begin{defi}
Let $F : \Mod \to \Mod$ be an operation. By Corollary \ref{grad}, $F$ lifts to an operation $\gr_{\IN}\!\Mod \to \gr_{\IN}\!\Mod$. There is a canonical operation $\Mod \to \gr_{\IN}\!\Mod$ which equips every module with the trivial grading concentrated in degree $1$. The composition $\Mod \to \gr_{\IN}\!\Mod \to \gr_{\IN}\!\Mod$ corresponds to a family of operations $F_n : \Mod \to \Mod$ with an isomorphism of operations
\[\bigoplus_{n \in \IN} F_n \iso F.\]
We call $F_n$ the \emph{homogeneous component of degree $n$} of $F$. Specifically, if $M$ is some $R$-module, then $(F_n)_R(M)$ consists of those elements $u \in F_R(M)$ such that $F_{R[T]}\bigl(T \cdot \id_{M[T]}\bigr)$ maps $u \cdot 1$ to $u \cdot T^n \in F_R(M)[T]$ (using the identification $F_{R[T]}(M[T]) \cong F_R(M)[T]$).
\end{defi}
 
\begin{defi} \label{homo-def}
Let $n \in \IN$. An operation $F : \Mod \to \Mod$ is called \emph{homogeneous of degree $n$} if for every $R$-module $M$ we have
\[F_{R[T]}\bigl(T \cdot \id_{M[T]}\bigr) = T^n \cdot \id_{F_{R[T]}(M[T])}.\]
Let $[\Mod,\Mod]_n \subseteq [\Mod,\Mod]$ denote the full subcategory of operations which are homogeneous of degree $n$.
\end{defi}
 
\begin{cor} \label{decomp1}
There is an equivalence of categories
\[\prod_{n \in \IN} [\Mod,\Mod]_n \to [\Mod,\Mod], \quad (F_n)_{n \in \IN} \mapsto \bigoplus_{n \in \IN} F_n.\]
\end{cor}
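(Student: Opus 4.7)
The construction preceding the statement already produces a canonical isomorphism $F \iso \bigoplus_{n \in \IN} F_n$ for every operation $F$, so essential surjectivity is at hand once one checks that each summand $F_n$ actually lies in $[\Mod,\Mod]_n$. This is immediate from the eigenspace description: by construction $u \in (F_n)_R(M)$ iff $F_{R[T]}(T \cdot \id_{M[T]})$ maps $u \cdot 1$ to $u \cdot T^n$, and since $F_{R[T]}(T \cdot \id_{M[T]})$ is $R[T]$-linear and preserves the decomposition $F_R(M)[T] = \bigoplus_n (F_n)_R(M)[T]$ (a consequence of the naturality $F(\phi) \circ F_{R[T]}(T \cdot \id) = F_{R[T]}(T \cdot \id) \circ F(\phi)$ for base-changes of morphisms $\phi$), its restriction $(F_n)_{R[T]}(T \cdot \id_{M[T]})$ is multiplication by $T^n$, verifying Definition \ref{homo-def}. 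Well-definedness of the forward functor is routine: a direct sum of operations is an operation because tensor product commutes with direct sums, and a family of morphisms of operations sums to a morphism.

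Faithfulness is immediate by projecting onto each summand. The substantive step is fullness, which I plan to handle as follows. Let $F = \bigoplus_n F_n$ and $G = \bigoplus_m G_m$ be operations with $F_n, G_m$ homogeneous of degree $n, m$ respectively, and let $\alpha : F \to G$ be a morphism of operations. Decompose $\alpha_R(M)$ into components $\alpha_R^{n,m}(M) : (F_n)_R(M) \to (G_m)_R(M)$; the claim is $\alpha^{n,m} = 0$ for $n \neq m$. Apply the naturality of $\alpha_{R[T]}(M[T])$ to the $R[T]$-linear endomorphism $T \cdot \id_{M[T]}$ of $M[T]$. By the homogeneity observation above, $F_{R[T]}(T \cdot \id_{M[T]})$ acts on the $n$-th summand of $F_R(M)[T] \cong F_{R[T]}(M[T])$ as multiplication by $T^n$, and analogously for $G$ with $T^m$. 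The coherence square in the definition of a morphism of operations, applied to the inclusion $R \hookrightarrow R[T]$, identifies $\alpha_{R[T]}^{n,m}(M[T])$ with $\alpha_R^{n,m}(M) \otimes_R R[T]$. The naturality square therefore reads
\[
T^n \cdot \bigl(\alpha_R^{n,m}(M) \otimes R[T]\bigr) = T^m \cdot \bigl(\alpha_R^{n,m}(M) \otimes R[T]\bigr).
\]
Evaluating at $x \otimes 1$ for $x \in (F_n)_R(M)$ gives $\alpha_R^{n,m}(M)(x) \otimes T^n = \alpha_R^{n,m}(M)(x) \otimes T^m$ inside $(G_m)_R(M) \otimes_R R[T] = \bigoplus_{i \geq 0} (G_m)_R(M) \cdot T^i$, which forces $\alpha_R^{n,m}(M)(x) = 0$ because $T^n$ and $T^m$ sit in distinct $R$-summands when $n \neq m$.

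I anticipate no real obstacle here; the whole argument relies on the already-established homogeneous decomposition plus the single trick of probing with $T \cdot \id_{M[T]}$. The only moving part beyond that is the bookkeeping needed to identify $\alpha_{R[T]}^{n,m}$ with the base change of $\alpha_R^{n,m}$, but this is exactly what the coherence square for a morphism of operations applied to $R \hookrightarrow R[T]$ supplies.
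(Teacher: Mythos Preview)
Your argument is correct and follows essentially the same approach as the paper: both use the already-constructed homogeneous decomposition $F \cong \bigoplus_n F_n$, verify each $F_n$ is homogeneous of degree $n$, and show that morphisms of operations respect the grading by exploiting compatibility with the base change $R \hookrightarrow R[T]$ and the action of $T \cdot \id_{M[T]}$. The paper phrases this as constructing an explicit pseudo-inverse $F \mapsto (F_n)_n$ rather than checking full faithfulness directly, and it invokes flatness of $R \to R[T]$ where you unpack the eigenspace description, but the substance is the same.
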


\begin{proof}
We define a pseudo-inverse functor by $F \mapsto (F_n)_{n \in \IN}$, where $F_n$ is the homogeneous component of degree $n$ of $F$. Using flatness of $R \to R[T]$, it follows easily that $F_n$ is, in fact, homogeneous of degree $n$. If $F,F'$ are two operations, using the compatibility with the base change $R \to R[T]$, one checks that every morphism $F \to F'$ restricts to a morphism $F_n \to F'_n$, where $n \in \IN$ is arbitrary. Finally, notice that the homogeneous component of degree $n$ of $\bigoplus_{n \in \IN} F_n$ is precisely $F_n$.
\end{proof}

Using similar definitions for finitely generated modules, we obtain:

\begin{cor} \label{decomp2}
There is a fully faithful functor
\[[\Mod_{\fg},\Mod_{\fg}] \to \prod_{n \in \IN} [\Mod_{\fg},\Mod_{\fg}]_n, \quad F \mapsto (F_n)_{n \in \IN}.\]
Its essentially image consists of those families $(F_n)_{n \in \IN}$ of operations, homogeneous of degree $n$, such that for every finitely generated $R$-module $M$ almost all images $(F_n)_R(M)$ vanish; in other words, $\bigoplus_{n \in \IN} (F_n)_R(M)$ is supposed to be finitely generated.
\end{cor}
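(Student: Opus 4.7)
The plan is to deduce this essentially from Corollary \ref{decomp1} by tracking how the homogeneous decomposition interacts with the finite generation hypothesis. The key observation is that if $F:\Mod_{\fg}\to\Mod_{\fg}$ is an operation and $M$ is a finitely generated $R$-module, then $M[T]$ is a finitely generated $R[T]$-module, so $F_{R[T]}(M[T])$ is a finitely generated $R[T]$-module. Via the identification $F_{R[T]}(M[T])\cong F_R(M)[T]$ of Remark \ref{onR}, the same grading construction as in Corollary \ref{decomp1} yields a decomposition $F_R(M)=\bigoplus_{n\in\IN}(F_n)_R(M)$; because $F_R(M)$ is finitely generated, only finitely many $(F_n)_R(M)$ are nonzero, and each is a direct summand of $F_R(M)$, hence itself finitely generated. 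This shows that every $F_n$ genuinely restricts to an operation $\Mod_{\fg}\to\Mod_{\fg}$, homogeneous of degree $n$ (for exactly the reasons given in the proof of Corollary \ref{decomp1}), so that the functor $F\mapsto (F_n)_{n\in\IN}$ is well-defined.

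For fully faithfulness, I would repeat the argument from Corollary \ref{decomp1} verbatim: a morphism of operations $\alpha:F\to F'$ is compatible with the base change along $R\to R[T]$, which forces $\alpha_R(M)$ to respect the $\IN$-gradings on $F_R(M)[T]$ and $F'_R(M)[T]$, hence to restrict componentwise to morphisms $\alpha_n:F_n\to F'_n$; conversely, a family $(\alpha_n)_{n\in\IN}$ assembles into a morphism $\alpha=\bigoplus_n \alpha_n$ because by hypothesis the direct sum $\bigoplus_n(F_n)_R(M)$ is finite at each $(R,M)$, so no convergence issues arise. The two constructions are mutually inverse because the homogeneous component of degree $n$ of the (necessarily finite) direct sum $\bigoplus_m F_m$ is $F_n$, by homogeneity and uniqueness of the grading.

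For the essential image, given a family $(F_n)_{n\in\IN}$ of operations, homogeneous of degree $n$, with the property that $\bigoplus_n(F_n)_R(M)$ is finitely generated for every finitely generated $R$-module $M$, I would define $F_R(M):=\bigoplus_n(F_n)_R(M)$ with componentwise functoriality and with base change isomorphisms obtained as the direct sum of those of the $F_n$. The finiteness hypothesis ensures $F_R(M)\in\Mod_{\fg}(R)$, and the coherence conditions for $F$ follow pointwise from those of each $F_n$. By construction, the homogeneous component of $F$ of degree $n$ is $F_n$, which shows that $(F_n)_{n\in\IN}$ lies in the essential image. Conversely, if $(F_n)_{n\in\IN}$ comes from some $F\in[\Mod_{\fg},\Mod_{\fg}]$, then $\bigoplus_n(F_n)_R(M)=F_R(M)$ is finitely generated, so the characterization is sharp.

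The argument is almost entirely formal given Corollary \ref{decomp1}; the only real content is the bookkeeping that (i) $F_n$ preserves finite generation as a summand of $F$, and (ii) reassembling requires, and is equivalent to, the finiteness of the total direct sum. I do not foresee a genuine obstacle—the mild care needed is only to avoid passing through $[\Mod,\Mod]$, since an operation on $\Mod_{\fg}$ need not extend to $\Mod$; but this is circumvented by working directly with $M[T]$, which is finitely generated over $R[T]$ even when $M$ is only finitely generated over $R$.
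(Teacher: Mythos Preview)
Your proposal is correct and follows precisely the approach the paper intends: the paper gives no explicit proof of Corollary~\ref{decomp2}, merely prefacing it with ``Using similar definitions for finitely generated modules, we obtain'', so you have simply written out the details that the paper leaves implicit. Your care in noting that $M[T]$ stays finitely generated over $R[T]$, that each $(F_n)_R(M)$ is a finitely generated direct summand, and that one must work directly in $\Mod_{\fg}$ rather than detour through $\Mod$, is exactly the bookkeeping the paper is silently assuming.
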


\begin{ex}
The $n$-th exterior power $\Lambda^n$ is an operation which is homogeneous of degree $n$. The direct sum $\bigoplus_{n \in \IN} \Lambda^n$ is an operation both on $\Mod$ and on $\Mod_{\fg}$. This is because if some $R$-module $M$ is generated by $n$ elements, then $\Lambda^m_R(M)=0$ for all $m>n$. This shows that there are operations on $\Mod_{\fg}$ with infinitely many non-trivial homogeneous components.
\end{ex}

\begin{defi} \label{bounded}
Let us call an operation $F : \Mod_{\fg} \to \Mod_{\fg}$ \emph{bounded} if there is some $n \in \IN$ such that $F_m=0$ for all $m>n$. We get a full subcategory $[\Mod_{\fg},\Mod_{\fg}]_{\bound}$ of $[\Mod_{\fg},\Mod_{\fg}]$.
\end{defi}

\begin{rem} \label{bounded2}
By Corollary \ref{decomp2}, we have
\[[\Mod_{\fg},\Mod_{\fg}]_{\bound} \simeq \bigoplus_{n \in \IN}[\Mod_{\fg},\Mod_{\fg}]_n.\]
Here, we use the notation $\bigoplus_{i \in I} \C_i$ for the full subcategory of $\prod_{i \in I} \C_i$ consisting of those objects $(X_i)_{i \in I}$ such that almost all $X_i$ are zero.
\end{rem}

Corollaries \ref{decomp1} and \ref{decomp2} allow us to restrict our attention to the categories of homogeneous operations $[\Mod,\Mod]_n$ resp.\ $[\Mod_{\fg},\Mod_{\fg}]_n$ for some fixed value of $n \in \IN$.

In the remainder of this section, each time $\Mod$ may be replaced by $\Mod_{\fg}$.

\begin{lemma}\label{homo-krit}
Let $F : \Mod \to \Mod$ be an operation. Then $F$ is homogeneous of degree~$n$ if and only if for every commutative $k$-algebra $R$, every $R$-module $M$ and every element $r \in R$ we have $F_R\bigl(r \cdot \id_M\bigr) = r^n \cdot \id_{F_R(M)}$.
\end{lemma}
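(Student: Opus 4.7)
The easy direction is $(\Leftarrow)$: if $F_R(r\cdot\id_M)=r^n\cdot\id_{F_R(M)}$ holds for all commutative $k$-algebras $R$, all $R$-modules $M$ and all $r\in R$, then specializing to the commutative $k$-algebra $R[T]$, the $R[T]$-module $M[T]$, and the element $T\in R[T]$ yields exactly the defining equation of homogeneity of degree $n$.

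For $(\Rightarrow)$, fix a commutative $k$-algebra $R$, an $R$-module $M$, and an element $r\in R$. Consider the $k$-algebra homomorphism
\[
\phi : R[T] \to R, \qquad T \mapsto r.
\]
Under the canonical identification $M[T]\otimes_{R[T]}R \cong M$ coming from the extension of scalars along $\phi$, the $R[T]$-linear endomorphism $T\cdot\id_{M[T]}$ base-changes to the $R$-linear endomorphism $r\cdot\id_M$. The naturality of the base change isomorphism $\theta_\phi$ in its module argument (applied to the morphism $T\cdot\id_{M[T]}:M[T]\to M[T]$) then gives a commutative square
\[
\begin{tikzcd}[column sep=55pt,row sep=20pt]
F_{R[T]}(M[T])\otimes_{R[T]}R \ar{r}{\theta_\phi(M[T])}[swap]{\sim} \ar{d}[swap]{F_{R[T]}(T\cdot\id_{M[T]})\otimes_{R[T]}R} & F_R(M) \ar{d}{F_R(r\cdot\id_M)} \\
F_{R[T]}(M[T])\otimes_{R[T]}R \ar{r}{\theta_\phi(M[T])}[swap]{\sim} & F_R(M).
\end{tikzcd}
\]

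Now I use the hypothesis that $F$ is homogeneous of degree $n$: the left vertical map equals $(T^n\cdot\id_{F_{R[T]}(M[T])})\otimes_{R[T]}R$. But $T^n$ acts as $r^n$ on $R$ via $\phi$, so this left vertical map becomes multiplication by $r^n$ on $F_{R[T]}(M[T])\otimes_{R[T]}R$. Since $\theta_\phi(M[T])$ is an isomorphism and equivariant for the action of $R$, commutativity of the square forces $F_R(r\cdot\id_M)=r^n\cdot\id_{F_R(M)}$, as required.

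The only delicate point is the identification of the base change of $T\cdot\id_{M[T]}$ along $\phi$ with $r\cdot\id_M$, which is an immediate check on elementary tensors $m\otimes 1$; no genuine obstacle arises in the argument.
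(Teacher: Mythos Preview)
Your proof is correct and follows exactly the same approach as the paper: for $(\Leftarrow)$ you specialize to $R[T]$, $M[T]$, $T$, and for $(\Rightarrow)$ you apply the base change $R[T]\to R$, $T\mapsto r$ to the defining equation of homogeneity. The paper states this in one sentence per direction, whereas you spell out the naturality square for $\theta_\phi$ explicitly, but the underlying argument is identical.
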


\begin{proof}
The direction $\Longleftarrow$ follows by applying the assumption to the $R[T]$-module $M[T]$ and the element $T \in R[T]$. The direction $\Longrightarrow$ follows by applying the base change $R[T] \to R$, $T \mapsto r$ to the assumption $F_{R[T]}\bigl(T \cdot \id_{M[T]}\bigr) =  T^n \cdot \id_{F_R(M)[T]}$.
\end{proof}
 
Our next result shows that homogeneous operations consist of polynomial functors in the sense of \cite[Sections 1 and 2]{M80}. It also shows the connection to Roby's polynomial rules (cf.\ \cite[Th\'{e}or\`{e}me I.1]{R63} and Remark \ref{enrich}).
 
\begin{lemma} \label{poly}
Let $F:\Mod \to \Mod$ be an operation which is homogeneous of degree $n$. If $M,N$ are $R$-modules, then the map
\[F_R : \Hom_R(M,N) \to \Hom_R\bigl(F_R(M),F_R(N)\bigr)\]
has the following property: Let $d \in \IN$ and $f_1,\dotsc,f_d \in \Hom_R(M,N)$. Then there is a polynomial
\[P \in \Hom_R\bigl(F_R(M),F_R(N)\bigr)\bigl[T_1,\dotsc,T_d\bigr]\]
which is homogeneous of degree $n$, such that for all $r_1,\dotsc,r_d \in R$ we have
\[F_R(r_1 \cdot f_1 + \dots + r_d  \cdot f_d) = P(r_1,\dotsc,r_d).\]
\end{lemma}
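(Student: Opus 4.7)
The plan is to construct $P$ by applying $F$ to the universal linear combination $T_1 f_1 + \dotsb + T_d f_d$ over the polynomial ring $R[T_1,\dotsc,T_d]$, and then force the resulting expression to be homogeneous of degree $n$ by means of a twisted base change $T_i \mapsto UT_i$.

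First, I would view $\sum_i T_i f_i$ as an $R[T]$-linear map $M[T] \to N[T]$ (writing $T$ for $T_1,\dotsc,T_d$) and apply $F_{R[T]}$. Using the base change isomorphisms $F_{R[T]}(M[T]) \cong F_R(M)[T]$ and the analogous one for $N$ from Remark \ref{onR}, this produces an $R[T]$-linear map $\Phi : F_R(M)[T] \to F_R(N)[T]$. Such a map is determined by its restriction to $F_R(M) \otimes 1$, i.e.\ by an $R$-linear map $F_R(M) \to F_R(N)[T]$, which I write as $u \mapsto \sum_{\alpha \in \IN^d} \phi_\alpha(u) T^\alpha$ with $\phi_\alpha : F_R(M) \to F_R(N)$ and $\phi_\alpha(u) = 0$ for almost all $\alpha$ (depending on $u$). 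Specializing along the base change $R[T] \to R$, $T_i \mapsto r_i$, gives $F_R(\sum r_i f_i)(u) = \sum_\alpha \phi_\alpha(u) r^\alpha$ by the functoriality of $F$ with respect to base change. So once I show $\phi_\alpha = 0$ unless $|\alpha| = n$, the element $P \coloneqq \sum_{|\alpha|=n} \phi_\alpha T^\alpha$ is a genuine finite sum (there are only finitely many multi-indices with $|\alpha|=n$) and is the desired homogeneous polynomial of degree $n$.

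The key step, and the main obstacle, is the homogeneity claim. For this I would introduce an auxiliary variable $U$ and consider the $k$-homomorphism $\psi : R[T] \to R[T,U]$ with $\psi(T_i) = UT_i$ and $\psi|_R = \id$. I would then compute $F_{R[T,U]}(\sum UT_i f_i)$ in two ways. Tracing the identifications, base changing $\Phi$ along $\psi$ sends $u \otimes 1$ to $\sum_\alpha \phi_\alpha(u) U^{|\alpha|} T^\alpha$, since $\psi(T^\alpha) = U^{|\alpha|} T^\alpha$. On the other hand, $\sum UT_i f_i$ factors as $(U \cdot \id_{N[T,U]}) \circ (\sum T_i f_i)$, so applying $F_{R[T,U]}$ and invoking Lemma \ref{homo-krit} for the scalar $U$ yields $U^n \cdot F_{R[T,U]}(\sum T_i f_i)$, which sends $u \otimes 1$ to $\sum_\alpha U^n \phi_\alpha(u) T^\alpha$ after base changing $\Phi$ along the untwisted inclusion $R[T] \hookrightarrow R[T,U]$. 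Since the monomials $T^\alpha U^j$ form a free basis of $R[T,U]$ over $R$ and hence of $F_R(N)[T,U]$ over $F_R(N)$, comparing coefficients forces $\phi_\alpha(u) = 0$ for every $u$ whenever $|\alpha| \neq n$, which finishes the proof. The most delicate point is keeping track of the twisted base change, but everything reduces cleanly to the identity $\psi(T^\alpha) = U^{|\alpha|}T^\alpha$.
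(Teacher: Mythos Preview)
Your proof is correct, but it follows a different route from the paper's. The paper first factors $\sum_i r_i f_i$ as $\nabla \circ (\oplus_i f_i) \circ (\oplus_i r_i\,\id_M) \circ \Delta$ through $M^{\oplus d}$, thereby reducing to the question of whether $F_R(\oplus_i r_i\,\id_M)$ is polynomial in the $r_i$; it then invokes the $\IN^d$-grading on $F_R(M^{\oplus d})$ coming from Corollary~\ref{grad}, and obtains homogeneity by pushing this grading forward along the bialgebra map $R[T_1,\dotsc,T_d]\to R[T]$, $T_i\mapsto T$, which computes total degree. You bypass both the $M^{\oplus d}$ detour and the comodule machinery: you work directly with the universal map $\sum_i T_i f_i$ on $M[T]\to N[T]$ and extract homogeneity via the scaling base change $T_i\mapsto UT_i$ together with the factorization $\sum_i UT_i f_i = (U\cdot\id)\circ\sum_i T_i f_i$ and Lemma~\ref{homo-krit}. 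Your argument is more elementary and self-contained; the paper's argument has the advantage of fitting into the coaction framework already built, and makes the role of the $\IN^d$-grading on $F_R(M^{\oplus d})$ explicit (which is reused later in Definition~\ref{linearization}).
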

 
For example, in case of the homogeneous operation $M \mapsto M^{\otimes 2}$ of degree $n=2$ and $d=2$, we have
\[(r_1  \cdot f_1 + r_2  \cdot f_2)^{\otimes 2} = r_1^2  \cdot f_1^{\otimes 2} + r_1 r_2  \cdot (f_1 \otimes f_2) + r_1 r_2  \cdot (f_2 \otimes f_1) + r_2^2  \cdot f_2^{\otimes 2}.\]

\begin{proof}
Let $\Delta : M \to M^{\oplus d}$ be the diagonal and $\nabla : N^{\oplus d} \to N$ be the codiagonal.
Then we may factor the morphism $r_1  \cdot f_1 + \dots + r_d  \cdot f_d$ as follows: 
\[\begin{tikzcd}[column sep=60pt]
M \ar{r}{\Delta} & M^{\oplus d} \ar{r}{\bigoplus_{i=1}^{d} r_i \cdot \id_M} & M^{\oplus d} \ar{r}{\bigoplus_{i=1}^{d} f_i} & N^{\oplus d} \ar{r}{\nabla} & N
\end{tikzcd}\]
Thus, it suffices to write $F_R(\oplus_{i=1}^{d} r_i \cdot \id_{M})$ as a homogeneous polynomial in $r_1,\dotsc,r_d$ with coefficients in the ring $S\coloneqq\End_R\bigl(F_R(M^{\oplus d})\bigr)$. Using the base change $R[T_1,\dotsc,T_d] \to R$, $T_i \mapsto r_i$, we see that it suffices to consider the universal case: We have to prove that 
\[\alpha \coloneqq F_{R[T_1,\dotsc,T_d]}\bigl(\oplus_{i=1}^{d} T_i \cdot \id_{M[T_1,\dotsc,T_d]}\bigr): F_R(M^{\oplus d})\bigl[T_1,\dotsc,T_d\bigr] \to F_R(M^{\oplus d})\bigl[T_1,\dotsc,T_d\bigr]\]
is induced by a homogeneous polynomial of degree $n$ in $S[T_1,\dotsc,T_d]$. We consider the $\IN^d$-grading on $M^{\oplus d}$ for which, for every $1 \leq i \leq d$, the $i$-th summand $M$ is the homogeneous component of degree $e_i=(0,\dotsc,1,\dotsc,0)$. By Corollary \ref{grad}, the module $F_R(M^{\oplus d})$ inherits an $\IN^d$-grading. In fact, $\alpha|_{F_R(M^{\oplus d})}$ is the corresponding $R[T_1,\dotsc,T_d]$-coaction. In general, if we apply to an $\IN^d$-graded module, i.e.\ an $R[T_1,\dotsc,T_d]$-comodule, the base change along the morphism of bialgebras $R[T_1,\dotsc,T_d] \to R[T]$, $T_i \mapsto T$, we obtain the $\IN$-grading of total degrees. Applying this to $M^{\oplus d}$, we get the trivial $\IN$-grading concentrated in degree $1$. Since $F$ is homogeneous of degree $n$, we deduce that the $\IN^d$-grading on $F_R(M^{\oplus d})$ has only homogeneous components of  total degree $n$, the other ones being zero. This means that the homomorphism
\[\alpha|_{F_R(M^{\oplus d})} : F_R(M^{\oplus d})\to F_R(M^{\oplus d})[T_1,\dotsc,T_d]\]
lands inside the $R$-submodule of polynomials which are homogeneous of degree $n$ over $F_R(M^{\oplus d})$. Since there are only finitely many  $(i_1,\dotsc,i_d) \in \IN^d$ with $i_1+\cdots+i_d=n$, we conclude that $\alpha|_{F_R(M^{\oplus d})}$ is given by a polynomial over $S$ which is homogeneous of degree $n$.
\end{proof}

\begin{cor} \label{homo1}
An operation $F : \Mod \to \Mod$ is homogeneous of degree $1$ if and only if it is linear. In particular, the two definitions of $[\Mod,\Mod]_1$ in Definitions \ref{linear-def} and \ref{homo-def} coincide.
\end{cor}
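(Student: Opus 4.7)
My plan is to read both directions straight off the tools already built up, namely Lemma \ref{homo-krit} and Lemma \ref{poly}.

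For the ``linear $\Rightarrow$ homogeneous of degree $1$'' direction I would start from $R$-linearity of each component $F_R$. Applied to the endomorphism $\id_M$ and a scalar $r \in R$, linearity gives $F_R(r \cdot \id_M) = r \cdot F_R(\id_M) = r \cdot \id_{F_R(M)}$, which by Lemma \ref{homo-krit} is exactly the condition that $F$ be homogeneous of degree $1$.

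For the converse I would use Lemma \ref{poly} twice, applied to a fixed pair of $R$-modules $M,N$ and morphisms of $\Hom_R(M,N)$. With $d=1$ and $f_1 = f$, the lemma provides a polynomial $P \in \Hom_R(F_R(M),F_R(N))[T]$, homogeneous of degree $1$, with $F_R(r \cdot f) = P(r)$ for all $r \in R$; a homogeneous linear polynomial in one variable has the form $P(T) = a T$, and specializing $r=1$ identifies $a = F_R(f)$, yielding $F_R(r \cdot f) = r \cdot F_R(f)$. With $d=2$ and $(f_1,f_2) = (f,g)$, the corresponding $P(T_1,T_2)$ is linear and homogeneous, so $P(T_1,T_2) = a T_1 + b T_2$ with $a = F_R(f)$ and $b = F_R(g)$ (specializing $(r_1,r_2)$ to $(1,0)$ and $(0,1)$ respectively); specializing to $(1,1)$ gives $F_R(f+g) = F_R(f) + F_R(g)$. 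Together these prove that $F_R$ is $R$-linear on hom-sets.

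There is no real obstacle here: everything is a direct specialization of Lemma \ref{poly}, and the two definitions of $[\Mod,\Mod]_1$ in Definitions \ref{linear-def} and \ref{homo-def} then agree by construction. The only small point to keep in mind is that the equality $F_R(0) = 0$ needed for $F_R$ to be additive follows either from the $d=1$ specialization with $r=0$, or from $F_R(0) + F_R(0) = F_R(0+0) = F_R(0)$ once additivity is in place.
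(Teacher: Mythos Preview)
Your proposal is correct and follows essentially the same approach as the paper: the $\Longleftarrow$ direction is immediate (the paper just calls it trivial, you spell it out via Lemma \ref{homo-krit}), and for $\Longrightarrow$ both you and the paper invoke Lemma \ref{poly} and specialize the resulting linear polynomial at the standard basis vectors to identify its coefficients. The only cosmetic difference is that the paper handles scalar multiplication and additivity in a single application of Lemma \ref{poly} with $d=2$ and arbitrary $(r_1,r_2)$, whereas you split this into separate $d=1$ and $d=2$ steps; the $d=1$ step is redundant but harmless.
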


\begin{proof} (Cf.\ \cite[Remark (2.3)]{M80})
The direction $\Longleftarrow$ is trivial. In order to prove $\Longrightarrow$, we apply Lemma \ref{poly}. If $f_1,f_2 : M \to N$ are two $R$-linear maps, there are two $R$-linear maps $g_1,g_2 : F_R(M) \to F_R(N)$ such that $F_R(r_1  \cdot f_1 + r_2  \cdot f_2) = r_1  \cdot g_1 + r_2  \cdot g_2$ holds for all $r_1,r_2 \in R$. For $r_1=1$, $r_2=0$ this shows $g_1 = F_R(f_1)$. Likewise, we have $g_2 = F_R(f_2)$. Thus, $F_R(r_1  \cdot f_1 + r_2  \cdot f_2) = r_1  \cdot F_R(f_1) + r_2  \cdot F_R(f_2)$ holds for all $r_1,r_2 \in R$. This means that $F_R$ is $R$-linear.
\end{proof}

Homogeneous operations of degree $0$ are easy to classify.

\begin{lemma} 
There is an equivalence of categories $[\Mod,\Mod]_0 \simeq \Mod(k)$ which maps $F$ to $F_k(k)$.
\end{lemma}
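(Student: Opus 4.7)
The plan is to construct a pseudo-inverse $[\Mod,\Mod]_0 \to \Mod(k)$ and then verify essential surjectivity and full faithfulness. The functor $\Mod(k) \to [\Mod,\Mod]_0$ I have in mind sends a $k$-module $V$ to the \emph{constant} operation $F^V$ defined by $F^V_R(M) \coloneqq V \otimes_k R$, with $F^V_R(f) \coloneqq \id_{V \otimes_k R}$ for every morphism $f$, equipped with the canonical base change isomorphism $(V \otimes_k R) \otimes_R S \iso V \otimes_k S$. By Lemma \ref{homo-krit} this is homogeneous of degree $0$, since $F^V_R(r \cdot \id_M) = \id_{V \otimes_k R}$ for every $r \in R$. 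The claimed pseudo-inverse sends $F$ to $V \coloneqq F_k(k)$.

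For essential surjectivity, let $F \in [\Mod,\Mod]_0$. By Lemma \ref{poly} with $n=0$ and $d=1$, for any $R$-linear $f : M \to N$ the expression $F_R(r \cdot f)$ is a polynomial in $r$ which is homogeneous of degree $0$, hence constant in $r$. Setting $r=1$ and $r=0$ yields $F_R(f) = F_R(0_{M,N})$, so $F_R$ sends every morphism in $\Mod(R)$ to the canonical factorisation $F_R(M) \to F_R(0) \to F_R(N)$ through $F_R(0)$.

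Applied to $\id_M$ and to $\id_0 = (0 \to M \to 0)$ respectively, functoriality of $F_R$ gives
\[ \id_{F_R(M)} = F_R(0 \to M) \circ F_R(M \to 0), \qquad \id_{F_R(0)} = F_R(M \to 0) \circ F_R(0 \to M),\]
so the canonical maps $F_R(0) \rightleftarrows F_R(M)$ are mutually inverse isomorphisms, natural in $M$. Composing with the base change isomorphism $F_k(0) \otimes_k R \iso F_R(0)$ coming from $k \to R$ and the analogous iso $F_k(k) \cong F_k(0)$, I obtain a natural isomorphism $F_R(M) \cong V \otimes_k R$. A diagram chase using the coherence axioms of Definition \ref{defop} applied to $k \to R \to S$ then shows this isomorphism is compatible with the base change data $\theta_f$, hence assembles into an isomorphism of operations $F \iso F^V$.

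For full faithfulness, a morphism of operations $\alpha : F^V \to F^W$ is componentwise a natural transformation between functors constant in $M$, hence reduces to a single $R$-linear map $\alpha_R : V \otimes_k R \to W \otimes_k R$ for each $R$; the base change coherence then forces $\alpha_R = \alpha_k(k) \otimes_k R$, identifying morphisms $F^V \to F^W$ with $\Hom_k(V,W)$. I do not anticipate a serious obstacle; the only slightly delicate point is checking that the iso $F_R(M) \cong F_R(0)$ is compatible with the base change $R \to S$, but this is automatic from the naturality of $\theta_f$ in the module variable applied to the canonical maps $M \to 0$ and $0 \to M$.
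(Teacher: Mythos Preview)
Your proof is correct and follows essentially the same route as the paper: both arguments show that $F_R(0_{M,N})$ is an isomorphism for every $M,N$, deduce $F_R(M)\cong F_R(0)\cong F_k(k)\otimes_k R$, and then invoke the coherence for $k\to R\to S$ to upgrade this to an isomorphism of operations. The only cosmetic difference is that you obtain $F_R(f)=F_R(0_{M,N})$ by appealing to Lemma~\ref{poly} with $n=0$, whereas the paper goes directly via Lemma~\ref{homo-krit}, observing $F_R(0\cdot\id_M)=0^0\cdot\id_{F_R(M)}=\id_{F_R(M)}$ and then using that $0_{N,M}\circ 0_{M,N}=0_{M,M}$; your explicit treatment of full faithfulness is a welcome addition that the paper leaves implicit.
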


\begin{proof} (Cf.\ \cite[Remark (2.3)]{M80})
Any $k$-module $V$ induces the ``constant'' operation
\[M \mapsto V \otimes_k R\]
which is homogeneous of degree $0$. It maps $k \mapsto V$. Conversely, if $F : \Mod \to \Mod$ is an operation which is homogeneous of degree $0$, then for all $R$-modules $M$ we have $F_R(0 \cdot \id_M)=0^0 \cdot \id_{F_R(M)} = \id_{F_R(M)}$. It follows from this that $F_R(0 : N \to M)$ is inverse to $F_R(0 : M \to N)$. In particular, we have
\[F_R(M) \cong F_R(0)  \cong F_R(0 \otimes_k R) \cong F_k(k) \otimes_k R.\]
This isomorphism is natural in $M$. Besides, the coherence conditions in Definition \ref{defop} applied to $k \to R \to S$ show that this defines an isomorphism of operations.
\end{proof}


\section{Linearization of operations} \label{sec:red}

In this section, we will closely follow \cite[Sections 3 and 4]{M80}. The method may be seen as a categorification of the well-known polarization or linearization procedure for homogeneous polynomials \cite[Section 3.2]{P06}.
 
\begin{rem} \label{homo2}
Let $n \in \IN$. Let $F' : \Mod^n \to \Mod$ be an operation with $n$ arguments. If $G$ is any commutative monoid, then $F'$ lifts to an operation $\gr_G\!\Mod^n \to \gr_G\! \Mod$. This is done exactly as in Corollary \ref{grad}. Namely, if $(M_1,\dotsc,M_n) \in \Mod(R)^n$, then $G$-gradings on the $M_i$ correspond to morphisms $M_i[G] \to M_i[G]$ satisfying certain properties, i.e.\ to a morphism $(M_1,\dotsc,M_n)[G] \to (M_1,\dotsc,M_n)[G]$, which induces a morphism $F'_R(M_1,\dotsc,M_n)[G] \to F'_R(M_1,\dotsc,M_n)[G]$, which in turn corresponds to some $G$-grading on $F'_R(M_1,\dotsc,M_n)$.
  
In particular, if $G=\IN^n$ and we endow each $M_i$ with the trivial $\IN^n$-grading concentrated in degree $e_i = (0,\dotsc,1,\dotsc,0)$, we obtain an $\IN^n$-grading on $F'_R(M_1,\dotsc,M_n)$ whose homogeneous components define operations
\[F'_{i_1,\dotsc,i_n} : \Mod^n \to \Mod\]
which are homogeneous of degree $(i_1,\dotsc,i_n)$ and satisfy
\[\bigoplus_{(i_1,\dotsc,i_n) \in \IN^n} F'_{i_1,\dotsc,i_n} \cong F'.\]
As in Lemma \ref{homo-krit}, homogeneity means that for all commutative $k$-algebras $R$, all $R$-modules $M_1,\dotsc,M_n$ and all elements $r_1,\dotsc,r_n \in R$ we have
\[(F'_{i_1,\dotsc,i_n})_R\bigl(r_1 \cdot \id_{M_1},\dotsc,r_n \cdot \id_{M_n}\bigr) = r_1^{i_1} \cdots r_n^{i_n} \cdot \id_{(F'_{i_1,\dotsc,i_n})_R(M_1,\dotsc,M_n)}.\]
Using the base change $R[T_1,\dotsc,T_n] \to R[T]$, $T_i \mapsto T$, we see that the associated $\IN$-grading on $F'_R(M_1,\dotsc,M_n)$ given by total degrees is induced by the trivial $\IN$-grading on $(M_1,\dotsc,M_n)$ where each $M_i$ has degree $1$. Analogous remarks hold for $\Mod_{\fg}$.
\end{rem}

\begin{defi} \label{linearization}
Let $n \in \IN$. Let $F : \Mod \to \Mod$ be an operation which is homogeneous of degree $n$. We define the operation $F' : \Mod^n \to \Mod$ by
\[F'_R(M_1,\dotsc,M_n) \coloneqq F_R(M_1 \oplus \dots \oplus M_n),\]
equipped with the evident base change isomorphisms. We now apply Remark \ref{homo2} and construct a decomposition $F' \cong \bigoplus_{i_1,\dotsc,i_n} F'_{i_1,\dotsc,i_n}$ into homogeneous operations. Here, the only non-trivial operations are those with $i_1+\dots+i_n=n$; this is because the associated $\IN$-grading on $F'_R(M_1,\dotsc,M_n)=F_R(M_1 \oplus \dots \oplus M_n)$ is induced by the trivial $\IN$-grading on $M_1 \oplus \dots \oplus M_n$ concentrated in degree $1$ and $F$ is assumed to be homogeneous of degree $n$. In particular, we define the operation
\[L_F \coloneqq F'_{1,\dotsc,1} : \Mod^n \to \Mod\]
and call $L_F$ the \emph{linearization} of $F$. Explicitly, an element $u \in F_R(M_1 \oplus \dots \oplus M_n)$ belongs to $(L_F)_R(M_1,\dotsc,M_n)$ if and only if the endomorphism
\[F_R(M_1 \oplus \dots \oplus M_n)\bigl[T_1,\dotsc,T_n\bigr] \to F_R(M_1 \oplus \dots \oplus M_n)\bigl[T_1,\dotsc,T_n\bigr]\]
which is induced by the endomorphism $\bigoplus_{i=1}^{n} T_i \cdot \id_{M_i[T_1,\dotsc,T_n]}$ maps $u \cdot 1$ to $u \cdot T_1 \cdots T_n$. We make similar definitions for operations $F : \Mod_{\fg} \to \Mod_{\fg}$.
\end{defi}

\begin{ex}
For the operation $F = \Lambda^2$, which is homogeneous of degree $2$, the homogeneous components of $F'(M,N)=\Lambda^2(M \oplus N)$ are $F'(M,N)_{2,0} = \Lambda^2(M)$, $F'(M,N)_{1,1} = L_F(M,N) = M \otimes N$ and $F'_{0,2}(M,N) = \Lambda^2(N)$. More generally, the linearization of $\Lambda^n$ is the $n$-fold tensor product.
\end{ex}

\begin{defi} \label{action}
Let $F : \Mod_{\fg} \to \Mod_{\fg}$ be an operation which is homogeneous of degree $n$. Let $F'$ and $L_F$ be defined as in  Definition \ref{linearization}. Let $\sigma \in \Sigma_n$ be a permutation and let $M_1,\dotsc,M_n$ be a sequence of $R$-modules. There is an isomorphism
\[\tilde{\sigma} : M_1 \oplus \dots \oplus M_n \to M_{\sigma(1)} \oplus \dots \oplus M_{\sigma(n)}\]
characterized by $\tilde{\sigma} \circ \iota_{\sigma(i)} = \iota_i$, and therefore an isomorphism (denoted by the same symbol)
\[\tilde{\sigma} : F'_R(M_1,\dotsc,M_n) \to F'_R(M_{\sigma(1)},\dotsc,M_{\sigma(n)}).\]
It is easily checked that this restricts to an isomorphism
\[\tilde{\sigma} : (L_F)_R(M_1,\dotsc,M_n) \to (L_F)_R(M_{\sigma(1)},\dotsc,M_{\sigma(n)}).\]
Basically, this is because $T_1 \cdot \dots \cdot T_n$ is a symmetric polynomial. In particular, for every $R$-module $M$, the symmetric group $\Sigma_n$ acts \emph{from the right} on the $R$-module $(L_F)_R(M,\dotsc,M)$. In particular, we obtain a right $k[\Sigma_n]$-module structure on the $k$-module $(L_F)_k(k,\dotsc,k)$. This right $k[\Sigma_n]$-module will be denoted by $\V_F$. Every morphism of operations $F \to G$ restricts to a morphism of operations $L_F \to L_G$ and then to a morphism of right $k[\Sigma_n]$-modules $\V_F \to \V_G$. This defines a functor
\[[\Mod_{\fg},\Mod_{\fg}]_n \to \Mod_{\fg}\bigl(k[\Sigma_n]\bigr),\quad F \mapsto \V_F.\]
\end{defi}

\begin{defi} \label{action2}
Conversely, let $V$ be a finitely generated right $k[\Sigma_n]$-module. We define the corresponding \emph{Schur operation} by
\[\SS_V : \Mod_{\fg} \to \Mod_{\fg},\, M \mapsto V\otimes_{k[\Sigma_n]} M^{\otimes n},\]
equipped with the evident base change isomorphisms. Here, $\Sigma_n$ acts \emph{from the left} on $M^{\otimes n}$ by
\[\sigma \cdot (m_1 \otimes \dots \otimes m_n) \coloneqq m_{\sigma^{-1}(1)} \otimes \dots \otimes m_{\sigma^{-1}(n)}.\]
Observe that $\SS_V$ is homogeneous of degree $n$, and that every homomorphism of right $k[\Sigma_n]$-modules $V \to W$ induces a morphism $\SS_V \to \SS_W$ of operations. This defines a functor
\[\Mod_{\fg}\bigl(k[\Sigma_n]\bigr) \to [\Mod_{\fg},\Mod_{\fg}]_n,\quad V \mapsto \SS_V.\]
\end{defi}

\begin{thm} \label{Vclass}
Let $k$ be a commutative $\IQ$-algebra. Then for every finitely generated right $k[\Sigma_n]$-module $V$ there is an isomorphism of right $k[\Sigma_n]$-modules
\[V \iso \V_{\SS_V}.\]
Moreover, it is natural in $V$.
\end{thm}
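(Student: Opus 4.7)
My plan is to compute $L_{\SS_V}$ directly from the definition of $\SS_V$, identify its value at $(k,\dotsc,k)$, and then check the $\Sigma_n$-equivariance. First, unfolding the definition gives
\[(\SS_V)_R(M_1 \oplus \dots \oplus M_n) = V \otimes_{k[\Sigma_n]} \bigoplus_{f : \{1,\dotsc,n\} \to \{1,\dotsc,n\}} M_{f(1)} \otimes_R \dots \otimes_R M_{f(n)}.\]
The $\IN^n$-grading of Remark \ref{homo2} places the summand indexed by $f$ into multi-degree $(|f^{-1}(1)|,\dotsc,|f^{-1}(n)|)$, so the multi-degree $(1,\dotsc,1)$ part is indexed precisely by the bijections $\tau \in \Sigma_n$. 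Since the left $\Sigma_n$-action on the tensor power preserves this grading and $V \otimes_{k[\Sigma_n]} -$ commutes with direct sums, one obtains
\[(L_{\SS_V})_R(M_1,\dotsc,M_n) = V \otimes_{k[\Sigma_n]} \bigoplus_{\tau \in \Sigma_n} M_{\tau(1)} \otimes_R \dots \otimes_R M_{\tau(n)}.\]

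The key step is to recognize this multilinear piece, as a left $k[\Sigma_n]$-module, as the induced module $k[\Sigma_n] \otimes_k (M_1 \otimes_R \dots \otimes_R M_n)$ with its left-multiplication action. The $\Sigma_n$-equivariant isomorphism $\Psi$ sends a pure tensor $\rho \otimes (m_1 \otimes \dots \otimes m_n)$ to the element obtained by acting with $\rho$ on $m_1 \otimes \dots \otimes m_n$ placed into the identity summand; it is bijective on each direct summand by reindexing of tensor positions. Applying $V \otimes_{k[\Sigma_n]} -$ then cancels the group algebra factor and yields a natural isomorphism
\[(L_{\SS_V})_R(M_1,\dotsc,M_n) \cong V \otimes_k (M_1 \otimes_R \dots \otimes_R M_n).\]
Specializing to $R = k$ and $M_1 = \dots = M_n = k$ produces the desired isomorphism $V \iso \V_{\SS_V}$ of $k$-modules, with naturality in $V$ manifest from the construction.

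The remaining point, and the main technical obstacle, is that this isomorphism intertwines the two right $\Sigma_n$-actions. The action $\tilde{\sigma}$ of Definition \ref{action} on $(L_{\SS_V})_k(k,\dotsc,k)$ is induced by the map $\tilde{\sigma} : k^{\oplus n} \to k^{\oplus n}$ characterized by $\tilde{\sigma} \circ \iota_{\sigma(i)} = \iota_i$; on the multilinear piece it simply relabels the $\tau$-summand as the $\sigma^{-1}\tau$-summand without touching the tensor content. Under $\Psi$ this corresponds to right multiplication by $\sigma$ on the group algebra factor of $k[\Sigma_n] \otimes_k (M_1 \otimes \dots \otimes M_n)$, which after tensoring with $V$ over $k[\Sigma_n]$ and the collapse $k^{\otimes n} \cong k$ becomes right multiplication by $\sigma$ on $V$, matching the given action. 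The difficulty lies purely in the bookkeeping of the left action used in $V \otimes_{k[\Sigma_n]} -$ versus the right action coming from permuting arguments; no use of the $\IQ$-algebra hypothesis is required for this direction.
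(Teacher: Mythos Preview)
Your argument is correct and, notably, does not require the $\IQ$-algebra hypothesis at all. The paper proceeds differently: it first writes down the explicit map $\alpha : V \to \V_{\SS_V}$, $v \mapsto v \otimes (e_1 \otimes \dots \otimes e_n)$, checks equivariance, and then verifies that $\alpha$ is an isomorphism only in the special case $V = k[\Sigma_n] \otimes_k N$; the general case is reduced to this one using that $\IQ$ is a splitting field for $\Sigma_n$, so that every finitely generated $k[\Sigma_n]$-module is a direct summand of one of this form. Your route bypasses this reduction entirely by recognizing that the multilinear piece $\bigoplus_{\tau \in \Sigma_n} M_{\tau(1)} \otimes_R \dots \otimes_R M_{\tau(n)}$ is already the free left $k[\Sigma_n]$-module on $M_1 \otimes_R \dots \otimes_R M_n$, so that $V \otimes_{k[\Sigma_n]} (-)$ collapses it to $V \otimes_k (M_1 \otimes_R \dots \otimes_R M_n)$ for \emph{any} $V$ and any base ring $k$. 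In effect, you carry out the paper's computation for the regular representation and then observe it is functorial in $V$ before rather than after evaluating, which is why the characteristic-zero structure theory becomes unnecessary. (Your isomorphism, traced through, sends $v$ to $v \otimes (e_1 \otimes \dots \otimes e_n)$ and thus agrees with the paper's $\alpha$.) The paper's approach makes the candidate map visible from the outset, but your approach is both more elementary and strictly more general.
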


\begin{proof}
Let $v \in V$ and consider the element
\[\alpha(v)  \coloneqq  v \otimes (e_1 \otimes \dots \otimes e_n) \in V \otimes_{k[\Sigma_n]} (k \oplus \dots \oplus k)^{\otimes n} = (\SS_V')_k(k,\dotsc,k).\]
We claim that it is contained in the linearization $(L_{\SS_V})_k(k,\dotsc,k)$. In fact, the image under the $k[T_1,\dotsc,T_n]$-coaction is equal to
\[v \otimes (e_1 \cdot T_1 \otimes \dots \otimes e_n \cdot T_n) = \alpha(v) \cdot T_1 \cdots T_n.\]
Clearly, $\alpha(v)$ depends linearly on $v$. If $\sigma \in \Sigma_n$ is a permutation, then
\begin{align*}
\alpha(v \sigma) & =v\sigma \otimes (e_1 \otimes \dots \otimes e_n) =  v \otimes \sigma(e_1 \otimes \dots \otimes e_n) = v \otimes (e_{\sigma^{-1}(1)} \otimes \dots \otimes e_{\sigma^{-1}(n)}) \\ &  = v \otimes (\tilde{\sigma}(e_1)  \otimes \dots \otimes \tilde{\sigma}(e_n)) =  \alpha(v) \tilde{\sigma}.
\end{align*}
Thus, we obtain a homomorphism of right $k[\Sigma_n]$-modules
\[\alpha : V \to \V_{\SS_V},\]
which is clearly natural in $V$. We will show that $\alpha$ is an isomorphism in the case $V=k[\Sigma_n]$ first. In this case, $\SS_V$ identifies with the operation $M \mapsto M^{\otimes n}$, and hence $\SS'_V$ identifies with the operation
\[(M_1,\dotsc,M_n) \mapsto \bigoplus_{1 \leq i_1,\dotsc,i_n \leq n} M_{i_1} \otimes \dots \otimes M_{i_n}.\]
The $R[T_1,\dotsc,T_n]$-coaction maps $u \in M_{i_1} \otimes \dots \otimes M_{i_n}$ to $u \cdot T_{i_1} \cdots T_{i_n}$. Thus, $L_{\SS_V}$ identifies with the operation
\[(M_1,\dotsc,M_n) \mapsto \bigoplus_{\sigma \in \Sigma_n} M_{\sigma(1)} \otimes \dots \otimes M_{\sigma(n)}.\]
In particular, there is an isomorphism $k[\Sigma_n] \iso  (L_{\SS_V})_k(k,\dotsc,k)$. One checks that it coincides with $\alpha$. A similar calculation works for the case  $V=k[\Sigma_n] \otimes_k N$ for some finitely generated $k$-module $N$.

To treat the general case, we use the fact that $\IQ$ is a splitting field for $\Sigma_n$ \cite[Corollary 4.16]{L17}, which implies that there is an isomorphism of $\IQ$-algebras $\IQ[\Sigma_n] \iso \prod_{i=1}^{r} M_{n_i}(\IQ)$ for some sequence of natural numbers $n_1,\dotsc,n_r$. Here, $M_{n_i}(\IQ)$, as a submodule of $\IQ[\Sigma_n]$, is the isotypical component of an irreducible $\IQ[\Sigma_n]$-module $V_i$. The isomorphism induces an isomorphism $k[\Sigma_n] \iso \prod_{i=1}^{r} M_{n_i}(k)$. From this it follows that every (finitely generated) $k[\Sigma_n]$-module is isomorphic to $\bigoplus_{i=1}^{r} V_i \otimes_k N_i$ for some sequence of (finitely generated) $k$-modules $N_1,\dotsc,N_r$. Each $V_i$ is a direct summand of $\IQ[\Sigma_n]$, so that each $V_i \otimes_k N_i$ is a direct summand of $k[\Sigma_n] \otimes_k N_i$. Since we already know that $\alpha$ is an isomorphism for $k[\Sigma_n] \otimes_k N_i$, and both functors $V \mapsto \SS_V$ and $F \mapsto \V_F$ are additive, the general case follows.
\end{proof}

\begin{thm} \label{Fclass}
Let $k$ be a commutative $\IQ$-algebra. Then for every homogeneous operation $F : \Mod_{\fg} \to \Mod_{\fg}$ over $k$ of degree $n$ there is an isomorphism of operations
\[\SS_{\V_F} \iso F.\]
Moreover, it is natural in $F$.
\end{thm}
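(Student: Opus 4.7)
The plan is to construct a natural transformation $\bar\alpha \colon \SS_{\V_F} \to F$ by applying $F$ to the codiagonal, and then to show it is an isomorphism via a polarization argument analogous to the classical recovery of a homogeneous polynomial from its full polarization. For a finitely generated $R$-module $M$, let $\nabla \colon M^{\oplus n} \to M$ be the codiagonal and define $\alpha_M \colon L_F(M,\dotsc,M) \to F_R(M)$ as the restriction of $F_R(\nabla) \colon F_R(M^{\oplus n}) \to F_R(M)$ to the direct summand $L_F(M,\dotsc,M) \subseteq F_R(M^{\oplus n})$. Since $\nabla \circ \tilde\sigma = \nabla$ for every $\sigma \in \Sigma_n$, the map $\alpha_M$ is invariant under the right $\Sigma_n$-action from Definition \ref{action} and hence factors through the coinvariants. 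Combining Theorem \ref{multilin} (which supplies $L_F(M,\dotsc,M) \cong \V_F \otimes_k M^{\otimes n}$ naturally in $M$) with a check that the $\Sigma_n$-action corresponds to the standard biaction on $\V_F \otimes_k M^{\otimes n}$ yields a natural morphism $\bar\alpha_M \colon \SS_{\V_F}(M) = \V_F \otimes_{k[\Sigma_n]} M^{\otimes n} \to F_R(M)$. Base change compatibility and naturality in $F$ are then routine.

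Dually, let $\beta_M \colon F_R(M) \to L_F(M,\dotsc,M)$ be $\pi \circ F_R(\Delta)$, where $\Delta \colon M \to M^{\oplus n}$ is the diagonal and $\pi$ is the projection onto the $(1,\dotsc,1)$-multigraded component. The first key identity is $\alpha_M \circ \beta_M = n! \cdot \id_{F_R(M)}$; to prove it, I would use the parametrized diagonal $\Delta_T \colon m \mapsto (T_1 m,\dotsc,T_n m)$ over $R[T_1,\dotsc,T_n]$. Homogeneity (Lemma \ref{homo-krit}) gives $F_R(\nabla \circ \Delta_T) = (T_1+\cdots+T_n)^n \cdot \id_{F_R(M)[T]}$, and comparing the multigraded expansion $F_R(\Delta_T)(u) = \sum_{\mathbf{a}} u_{\mathbf{a}}\, T^{\mathbf{a}}$ with the multinomial expansion of the right-hand side, the coefficient of $T_1 \cdots T_n$ reads $F_R(\nabla)(u_{(1,\dotsc,1)}) = n! \cdot u$, giving $\alpha_M \circ \beta_M = n! \cdot \id$. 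Moreover, $\tilde\sigma \circ \Delta = \Delta$ implies that $\beta_M$ lands in $L_F(M,\dotsc,M)^{\Sigma_n}$; composing with the quotient to coinvariants produces $\beta'_M \colon F_R(M) \to \SS_{\V_F}(M)$ satisfying $\bar\alpha_M \circ \beta'_M = n! \cdot \id_{F_R(M)}$, so $\tfrac{1}{n!} \beta'_M$ is a section of $\bar\alpha_M$.

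The main obstacle is the reverse composition $\beta'_M \circ \bar\alpha_M$. I would compute $\beta_M \circ \alpha_M = \pi \circ F_R(\Delta \circ \nabla) \circ \iota$ on $L_F(M,\dotsc,M)$ by introducing $n^2$ indeterminates $T_{ij}$ and considering the endomorphism $A(T) = \sum_{i,j} T_{ij} \cdot \iota_i \pi_j$ of $M^{\oplus n}[T_{ij}]$, which specializes to $\Delta \circ \nabla$ at $T_{ij} = 1$. By Lemma \ref{poly}, $F_R(A(T))$ is a polynomial of total degree $n$ in the $T_{ij}$'s, and the $\IN^n \times \IN^n$-bigrading forces the $(1,\dotsc,1) \to (1,\dotsc,1)$-component to involve only the ``permutation-matrix'' monomials $\prod_j T_{\sigma(j),\, j}$ for $\sigma \in \Sigma_n$. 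Specializing $T_{ij} := \delta_{i,\sigma(j)} \cdot T$ reduces $A(T)$ to $T$ times a permutation endomorphism of $M^{\oplus n}$, and homogeneity identifies the coefficient of each permutation-matrix monomial with $\tilde\sigma$ (up to reindexing $\sigma \mapsto \sigma^{-1}$). Summing and setting all $T_{ij} = 1$ yields $\beta_M \circ \alpha_M = \sum_{\sigma \in \Sigma_n} \tilde\sigma$ on $L_F(M,\dotsc,M)$; passing to coinvariants, where each $\tilde\sigma$ acts as the identity, gives $\beta'_M \circ \bar\alpha_M = n! \cdot \id_{\SS_{\V_F}(M)}$. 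Since $n!$ is invertible in $k$, $\bar\alpha_M$ is an isomorphism with inverse $\tfrac{1}{n!} \beta'_M$, and naturality in $F$ is inherited from the construction of $\bar\alpha$.
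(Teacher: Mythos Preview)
Your argument is correct and follows the same overall strategy as the paper: build the map via $F_R(\nabla)$ restricted to $L_F(M,\dotsc,M)$, identify $L_F$ with $\V_F \otimes_k M^{\otimes n}$ via Theorem~\ref{multilin}, and use invertibility of $n!$ to pass between invariants and coinvariants. The one substantive difference is where the hard work happens. The paper observes (via Lemma~\ref{poly}) that each $F_R$ is a polynomial functor in Macdonald's sense and then simply \emph{cites} \cite[Theorem~4.10]{M80} for the isomorphism $(L_F)_R(M,\dotsc,M)^{\Sigma_n}\iso F_R(M)$. You instead reprove that statement from scratch, in its coinvariant form, by the $n^2$-variable polarization of $\Delta\circ\nabla$: your computation that $\pi\circ F_R(A(T))\circ\iota$ picks out exactly the permutation-matrix monomials, and that each such monomial contributes the corresponding $\tilde\sigma$, is precisely the content of Macdonald's argument. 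So your proof is genuinely self-contained where the paper's is not, at the price of being longer.

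One small expository point: to apply Theorem~\ref{multilin} you need $L_F$ to be multilinear, not merely homogeneous of multidegree $(1,\dotsc,1)$. The paper makes this step explicit by invoking Corollary~\ref{homo1} in each variable; you use it implicitly. It would be worth inserting a sentence to that effect.
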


\begin{proof}
Let $R$ be a commutative $k$-algebra. In Lemma \ref{poly} we have proven that the functor $F_R : \Mod_{\fg}(R) \to \Mod_{\fg}(R)$ is a polynomial functor which is homogeneous of degree $n$ in the sense of \cite[Sections 1 and 2]{M80}. It follows from \cite[Theorem 4.10]{M80} that there is a natural isomorphism
\[(L_F)_R(M,\dotsc,M)^{\Sigma_n} \iso F_R(M).\]
Specifically, it is given by the composition
\[(L_F)_R(M,\dotsc,M)^{\Sigma_n} \hookrightarrow (L_F)_R(M,\dotsc,M) \hookrightarrow F_R(M \oplus \dots \oplus M) \xrightarrow{F_R(\nabla)} F_R(M),\]
where $\nabla : M^{\oplus n} \to M$ is the codiagonal. The inverse has a similar description. Since $L_F$ is homogeneous of degree $(1,\dotsc,1)$, we may apply Corollary \ref{homo1} to each variable to deduce that $L_F$ is multilinear in the sense of Definition \ref{multilindef}. Thus, Theorem \ref{multilin} shows that for all $R$-modules $M_1,\dotsc,M_n$ the canonical homomorphism
\[(L_F)_k(k,\dotsc,k) \otimes_k (M_1 \otimes_R \dots \otimes_R M_n) \to (L_F)_R(M_1,\dotsc,M_n)\]
is an isomorphism. In particular, there is an isomorphism
\[(L_F)_k(k,\dotsc,k) \otimes_k M^{\otimes n} \iso (L_F)_R(M,\dotsc,M).\]
This is an isomorphism of right $k[\Sigma_n]$-modules when we use the right action of $\Sigma_n$ on $(L_F)_R(M,\dotsc,M)$ (resp.\ $(L_F)_k(k,\dotsc,k)$) from Definition \ref{action} and the right action of $\Sigma_n$ on $M^{\otimes n}$ which is induced by the left action from Definition \ref{action2} via pullback with $\sigma \mapsto \sigma^{-1}$. Since $n!$, the order of $\Sigma_n$, is invertible in $k$, it follows that
\begin{align*}
(L_F)_R(M,\dotsc,M)^{\Sigma_n} &\cong (L_F)_R(M,\dotsc,M)_{\Sigma_n} \cong \bigl((L_F)_k(k,\dotsc,k) \otimes_k M^{\otimes n}\bigr)_{\Sigma_n}
\\& \cong (L_F)_k(k,\dotsc,k) \otimes_{k[\Sigma_n]} M^{\otimes n},
\end{align*}
where in the last tensor product we use the left action of $\Sigma_n$ on $M^{\otimes n}$ again. Composing all these isomorphisms yields a natural isomorphism
\[\V_F \otimes_{k[\Sigma_n]} M^{\otimes n} \iso F_R(M).\]
One checks that this is, in fact, a morphism of operations.
\end{proof}

\begin{thm} \label{class}
Let $n \in \IN$. Let $k$ be a commutative $\IQ$-algebra. Then $V \mapsto \SS_V$ defines an equivalence of categories
\[\Mod_{\fg}\bigl(k[\Sigma_n]\bigr) \simeq [\Mod_{\fg},\Mod_{\fg}]_n.\]
\end{thm}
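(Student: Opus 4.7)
The plan is to package the two preceding theorems into the desired equivalence with almost no additional work, since the heavy lifting has already been carried out. We have a functor $V \mapsto \SS_V$ going from $\Mod_{\fg}(k[\Sigma_n])$ to $[\Mod_{\fg},\Mod_{\fg}]_n$ by Definition \ref{action2}, and a functor $F \mapsto \V_F$ going the other way by Definition \ref{action}. The goal is to verify that these are mutually quasi-inverse.

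First, I would observe that Theorem \ref{Vclass} provides a natural isomorphism $V \iso \V_{\SS_V}$ of right $k[\Sigma_n]$-modules, which exhibits the composition $\Mod_{\fg}(k[\Sigma_n]) \to [\Mod_{\fg},\Mod_{\fg}]_n \to \Mod_{\fg}(k[\Sigma_n])$ as naturally isomorphic to the identity functor. Next, Theorem \ref{Fclass} provides a natural isomorphism $\SS_{\V_F} \iso F$ of operations, which similarly exhibits the composition $[\Mod_{\fg},\Mod_{\fg}]_n \to \Mod_{\fg}(k[\Sigma_n]) \to [\Mod_{\fg},\Mod_{\fg}]_n$ as naturally isomorphic to the identity. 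By the standard criterion for an adjoint equivalence, this immediately yields that $V \mapsto \SS_V$ is an equivalence of categories with quasi-inverse $F \mapsto \V_F$.

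Since there is essentially no obstacle here beyond invoking the two theorems, the only thing to be slightly careful about is that the two natural isomorphisms of Theorems \ref{Vclass} and \ref{Fclass} need not satisfy the triangle identities on the nose; however, this is not required to conclude an equivalence of categories, as any pair of functors together with natural isomorphisms between the compositions and the identities yields an equivalence (one can always modify one of the natural isomorphisms to obtain an adjoint equivalence, but this is not needed). Hence the proof will be just a few lines citing the two prior theorems.
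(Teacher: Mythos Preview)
Your proposal is correct and matches the paper's approach exactly: the paper's entire proof is the single sentence ``This follows from Theorems \ref{Fclass} and \ref{Vclass}.'' Your additional remark about triangle identities is accurate but unnecessary here, since an equivalence of categories only requires natural isomorphisms in both compositions, not an adjoint equivalence.
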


\begin{proof}
This follows from Theorems \ref{Fclass} and \ref{Vclass}.
\end{proof}

\begin{rem}
One can use the classification of representations of $\Sigma_n$ in order to make the classification of homogeneous operations even more explicit \cite[Chapter 8]{F97}. For example, the three irreducible Schur operations which are homogeneous of degree $3$ are $\Sym^3$, $\Lambda^3$ and
\[M \mapsto (\Lambda^2(M) \otimes M) / \bigl\langle (a \wedge b) \otimes c +  (b \wedge c) \otimes a + (c \wedge a) \otimes b : a,b,c \in M \bigr\rangle .\]
Every other homogeneous operation of degree $3$ on $\Mod_{\fg}$ is a linear combination of such operations.
\end{rem}
 
\begin{thm} \label{main}
Let $k$ be a commutative $\IQ$-algebra. Then $(V_n)_{n \in \IN} \mapsto \bigoplus_{n \in \IN} \SS_{V_n}$ induces an equivalence of categories
\[\bigoplus_{n \in \IN} \Mod_{\fg}\bigl(k[\Sigma_n]\bigr) \simeq [\Mod_{\fg},\Mod_{\fg}]_{\bound}.\]
\end{thm}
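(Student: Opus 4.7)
The plan is to deduce Theorem \ref{main} by combining the homogeneous decomposition from Section \ref{sec:homo} with the degree-by-degree classification in Theorem \ref{class}. Nothing new needs to be proved; the work is to check that the candidate functor $(V_n)_{n \in \IN} \mapsto \bigoplus_{n \in \IN} \SS_{V_n}$ is naturally identified with the composition of two known equivalences.

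First I would invoke Remark \ref{bounded2} (which itself is a direct consequence of Corollary \ref{decomp2} and Definition \ref{bounded}) to obtain an equivalence
\[[\Mod_{\fg},\Mod_{\fg}]_{\bound} \simeq \bigoplus_{n \in \IN} [\Mod_{\fg},\Mod_{\fg}]_n,\]
sending a bounded operation $F$ to the finite sequence of its homogeneous components $(F_n)_{n \in \IN}$. Next, for each fixed $n \in \IN$, Theorem \ref{class} (which assumes $k$ is a $\IQ$-algebra) provides an equivalence $\Mod_{\fg}(k[\Sigma_n]) \simeq [\Mod_{\fg},\Mod_{\fg}]_n$ via $V \mapsto \SS_V$. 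Taking the direct sum of these equivalences yields
\[\bigoplus_{n \in \IN} \Mod_{\fg}(k[\Sigma_n]) \simeq \bigoplus_{n \in \IN} [\Mod_{\fg},\Mod_{\fg}]_n.\]
Composing with the equivalence above gives an equivalence between $\bigoplus_{n \in \IN} \Mod_{\fg}(k[\Sigma_n])$ and $[\Mod_{\fg},\Mod_{\fg}]_{\bound}$.

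It remains to check that this composite equivalence really sends $(V_n)_{n \in \IN}$ to $\bigoplus_{n \in \IN} \SS_{V_n}$, i.e.\ that the latter operation is bounded and has $\SS_{V_n}$ as its homogeneous component of degree $n$. Boundedness holds because a sequence in $\bigoplus_{n \in \IN} \Mod_{\fg}(k[\Sigma_n])$ has only finitely many nonzero terms, so the direct sum $\bigoplus_{n \in \IN} \SS_{V_n}$ is finite and each $(\SS_{V_n})_R(M) = V_n \otimes_{k[\Sigma_n]} M^{\otimes n}$ is finitely generated over $R$. Since $\SS_{V_n}$ is homogeneous of degree $n$ by construction (Definition \ref{action2}), and the homogeneous decomposition of Corollary \ref{decomp1} is unique, the $n$-th homogeneous component of $\bigoplus_m \SS_{V_m}$ is precisely $\SS_{V_n}$.

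I do not expect a real obstacle here: everything is an assembly of earlier results. The only point deserving a line of care is that morphisms in $\bigoplus_{n \in \IN} \Mod_{\fg}(k[\Sigma_n])$ (morphisms of finite sequences) correspond, via the equivalence, to morphisms in $[\Mod_{\fg},\Mod_{\fg}]_{\bound}$ that respect the homogeneous decomposition; this follows from full faithfulness in Corollary \ref{decomp2}, which guarantees that any morphism between bounded operations decomposes uniquely into morphisms between the homogeneous components.
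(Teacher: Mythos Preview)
Your proposal is correct and matches the paper's own argument exactly: the paper's proof is the single line ``This follows from Theorem \ref{class} and Remark \ref{bounded2}.'' Your additional verification that the composite equivalence agrees with $(V_n)_n \mapsto \bigoplus_n \SS_{V_n}$ is a harmless elaboration of what the paper leaves implicit.
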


\begin{proof}
This follows from Theorem \ref{class} and Remark \ref{bounded2}.
\end{proof}

\begin{rem} \label{mainn}
Theorem \ref{main} remains true for $\Mod_{\fp}$ using the same proof. Besides, a very similar proof can be used to show, for every $m \in \IN$,
\[\bigoplus_{n \in \IN^m} \Mod_{\fg}\bigl(k[\Sigma_{n_1} \times \dots \times \Sigma_{n_m}]\bigr) \simeq [\Mod_{\fg}^m,\Mod_{\fg}]_{\bound}.\]
The equivalence maps $(V_n)_{n \in \IN^m}$ to the Schur operation with $m$ arguments
\[(M_1,\dotsc,M_m) \mapsto \bigoplus_{n \in \IN^m} V_n \otimes_{k[\Sigma_{n_1} \times \dots \times \Sigma_{n_m}]} M_1^{\otimes n_1} \otimes \dots \otimes M_m^{\otimes n_m}.\]
\end{rem}

We can also describe the full category of operations over a field $k$ of characteristic zero. If $V$ is a finitely generated right $k[\Sigma_n]$-module, then  $V$ is isomorphic to a finite direct sum of copies of Specht modules $V_{\lambda}$ associated to partitions $\lambda$ of $n$ \cite[Lecture 4]{FH91}. We will say that \emph{$\lambda$ appears in $V$} if the multiplicity $m_{\lambda}(V)$ of $V_{\lambda}$ in $V$ is positive.
 
\begin{thm} \label{mainfull}
Let $k$ be a field of characteristic zero. Then $F \mapsto (\V_{F_n})_{n \in \IN}$ induces an equivalence of categories between $[\Mod_{\fg},\Mod_{\fg}]$ and the category of sequences of finitely generated right $k[\Sigma_n]$-modules $V_n$ such that, for every $d \in \IN$, there are only finitely many partitions of length $\leq d$ that appear in one of the $V_n$.
\end{thm}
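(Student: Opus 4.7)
The plan is to combine the homogeneous decomposition of Section \ref{sec:homo} with the classification of homogeneous operations (Theorem \ref{class}), and then identify the precise condition on a sequence $(V_n)$ that makes the corresponding Schur operation land in $\Mod_{\fg}$. By Corollary \ref{decomp2}, the functor $F \mapsto (F_n)_{n \in \IN}$ is fully faithful from $[\Mod_{\fg},\Mod_{\fg}]$ into $\prod_n [\Mod_{\fg},\Mod_{\fg}]_n$, with essential image consisting of those families for which $\bigoplus_n (F_n)_R(M)$ is finitely generated for every commutative $k$-algebra $R$ and every finitely generated $R$-module $M$. Combined with Theorem \ref{class}, this identifies $[\Mod_{\fg},\Mod_{\fg}]$ with the full subcategory of $\prod_n \Mod_{\fg}(k[\Sigma_n])$ consisting of sequences $(V_n)$ for which $\bigoplus_n \SS_{V_n}(M)$ is finitely generated for all such $M$. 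Full faithfulness of $F \mapsto (\V_{F_n})_{n \in \IN}$ is then automatic, so the task reduces to translating this module-theoretic finiteness condition into the stated combinatorial one.

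For necessity, I would test the condition on $M = k^d$ for arbitrary $d \in \IN$. In characteristic zero each $V_n$ decomposes as $V_n \cong \bigoplus_{\lambda \vdash n} V_{\lambda}^{m_{\lambda}(V_n)}$. Schur-Weyl duality, applied inside the decomposition of $(k^d)^{\otimes n}$ as a $\Sigma_n \times GL_d$-module, shows that $V_{\lambda} \otimes_{k[\Sigma_n]} (k^d)^{\otimes n}$ is a nonzero finite-dimensional $k$-vector space exactly when $\ell(\lambda) \leq d$, and vanishes otherwise. Consequently
\[\bigoplus_{n \in \IN} \SS_{V_n}(k^d) \;\cong\; \bigoplus_{\lambda \,:\, \ell(\lambda) \leq d} \bigl(V_{\lambda} \otimes_{k[\Sigma_{|\lambda|}]} (k^d)^{\otimes |\lambda|}\bigr)^{m_{\lambda}(V_{|\lambda|})}\]
is finite-dimensional over $k$ if and only if the set of partitions $\lambda$ of length $\leq d$ with $m_{\lambda}(V_{|\lambda|}) > 0$ is finite, which is precisely the stated combinatorial condition.

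For sufficiency, I would take an arbitrary finitely generated $R$-module $M$, pick $d$ and a surjection $R^d \twoheadrightarrow M$. Each Schur operation $\SS_{V_n}$ is a quotient functor of $V_n \otimes_k (-)^{\otimes n}$ and hence preserves surjections, giving a surjection $\SS_{V_n}(R^d) \twoheadrightarrow \SS_{V_n}(M)$. Using $R^d \cong k^d \otimes_k R$ together with the base change isomorphism, one obtains $\SS_{V_n}(R^d) \cong \SS_{V_n}(k^d) \otimes_k R$. Under the assumed combinatorial condition, $\bigoplus_n \SS_{V_n}(k^d)$ is finite-dimensional over $k$ by the necessity calculation, so $\bigoplus_n \SS_{V_n}(R^d) \cong \bigl(\bigoplus_n \SS_{V_n}(k^d)\bigr) \otimes_k R$ is finitely generated as an $R$-module, and so is its quotient $\bigoplus_n \SS_{V_n}(M)$. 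The only representation-theoretic input beyond Theorem \ref{class} is the classical fact that $V_{\lambda} \otimes_{k[\Sigma_n]} (k^d)^{\otimes n}$ is nonzero iff $\ell(\lambda) \leq d$, so I do not expect any serious obstacle; the main care is in keeping track of how the Specht decomposition of each $V_n$ interacts with evaluation at the universal test modules $k^d$.
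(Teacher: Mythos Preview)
Your proposal is correct and follows essentially the same route as the paper: reduce via Corollary \ref{decomp2} and the classification of homogeneous operations to a finiteness condition on sequences $(V_n)$, then test on $M=k^d$ using the Specht decomposition and the fact that $V_\lambda \otimes_{k[\Sigma_n]} (k^d)^{\otimes n}$ is nonzero precisely when $\ell(\lambda)\le d$. The paper compresses your necessity and sufficiency arguments into a single paragraph (using that Schur functors preserve epimorphisms and that free $R$-modules are base changes of free $k$-modules), and cites \cite[Theorem 6.3 (1)]{FH91} rather than Schur--Weyl duality for the key vanishing fact, but the content is the same.
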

 
\begin{proof}
By Corollary \ref{decomp2} and Theorem \ref{Fclass}, the category $[\Mod_{\fg},\Mod_{\fg}]$ is equivalent to the category of sequences of finitely generated right $k[\Sigma_n]$-modules $V_n$ such that for every finitely generated $R$-module $M$ almost all of the $R$-modules $V_n \otimes_{k[\Sigma_n]} M^{\otimes n}$ vanish. Since Schur functors preserve epimorphisms, and free $R$-modules are base changes of free $k$-modules, it suffices to consider the case $R=k$ and $M=k^{\oplus d}$ for some $d \in \IN$. We have
\[V_n \otimes_{k[\Sigma_n]} M^{\otimes n} \cong \bigoplus_{\lambda \text{ partition of } n} \bigl(V_{\lambda} \otimes_{k[\Sigma_n]} M^{\otimes n}\bigr)^{\oplus m_{\lambda}(V_n)}.\]
This vanishes if and only if for all $\lambda$ appearing in $V_n$ we have $V_{\lambda} \otimes_{k[\Sigma_n]} M^{\otimes n}=0$. By \cite[Theorem 6.3 (1)]{FH91}, this happens if and only if the length of $\lambda$ is $>d$. Thus, for fixed $d \in \IN$, the condition is that almost all $n$ have the property that all partitions appearing in $V_n$ are of length $>d$. This is logically equivalent to the condition that there are only finitely many partitions of length $\leq d$ that appear in one of the $V_n$.
\end{proof}
 
\begin{ex}
The partitions $(~)$, $(1)$, $(2)$, $\dotsc$ of length $1$ do not induce an operation on $\Mod_{\fg}$. In fact, they correspond to the symmetric algebra on $\Mod$. In contrast, the partitions $(~)$, $(1)$, $(1,1)$, $(1,1,1)$, $\dotsc$ have exactly one partition of length $d$, for each $d \in \IN$, and therefore do induce an operation on $\Mod_{\fg}$, namely the exterior algebra. We could also allow that, for instance, the $n$-th partition has multiplicity $n$. Another positive example is the sequence of partitions (sorted by length, not degree) $(~)$, $(1)$, $(2)$, $(1,1)$, $(2,1)$, $(2,2)$, $(1,1,1)$, $(2,1,1)$, $(2,2,1)$, $(2,2,2)$, $\dotsc$.
\end{ex}


\section{Appendix on constructive algebra} \label{sec:con}

In our proof of Theorem \ref{lin} we implicitly used both the axiom of choice and the law of the excluded middle. In this section we will give a proof of Theorem \ref{lin} and hence of Theorem \ref{multilin} which works in constructive algebra. This means that we will not use the law of the excluded middle. In particular, by Diaconescu's Theorem, the axiom of choice will not be available. 

Our first result is a constructive version of Grothendieck's Generic Freeness Lemma \cite[Theorem 14.4]{E95}. Actually, it is only the special case for $R$-modules; the general statement also involves $R$-algebras. We will include the proof because we could not find a proper reference for precisely this version.
 
\begin{lemma} \label{generic-free}
Let $R$ be a commutative reduced ring. Let $M$ be a finitely generated $R$-module. Assume that $f \in R$ has the following property: every $g \in \langle f \rangle$ such that $M[g^{-1}]$ is free over $R[g^{-1}]$ satisfies $g=0$. Then we have $f=0$. In particular, for $f \coloneqq 1$, if every $g \in R$ such that $M[g^{-1}]$ is free over $R[g^{-1}]$ satisfies $g=0$, then $R=0$.
\end{lemma}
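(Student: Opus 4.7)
My plan is to argue by induction on the number $n$ of generators of $M$, with the ring $R$ fixed throughout. Denote the hypothesis of the lemma for the pair $(f, M)$ by $(\star)$. The base case $n=0$ is immediate: $M = 0$ gives $M[f^{-1}] = 0$, which is free of rank zero, so $(\star)$ applied to $g \coloneqq f \in \langle f \rangle$ yields $f = 0$. For the inductive step $n \geq 1$, I fix generators $m_1, \dotsc, m_n$ of $M$ and introduce two auxiliary modules with $n-1$ generators, namely $\overline{M} \coloneqq Rm_2 + \dotsb + Rm_n$ and $N \coloneqq M/Rm_1$. The goal is to deduce $f \cdot \Ann(m_1) = 0$ from $(\star)$, then use this to derive the analogous hypothesis for $(f, N)$, and finally apply induction to $N$.

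For the first step, I would fix $a \in \Ann(m_1)$ and verify the lemma's hypothesis for the pair $(fa, \overline{M})$: if $g \in \langle fa \rangle$ satisfies that $\overline{M}[g^{-1}]$ is free over $R[g^{-1}]$, then writing $g = fah$ shows that $a$ is a unit in $R[g^{-1}]$, so $m_1 = 0$ there, and hence $M[g^{-1}] = \overline{M}[g^{-1}]$ is free; since $g \in \langle f \rangle$, the hypothesis $(\star)$ forces $g = 0$. Induction applied to $\overline{M}$ then yields $fa = 0$. Consequently $\Ann(m_1)[f^{-1}] = 0$, so the cyclic module $Rm_1 \cong R/\Ann(m_1)$ becomes free of rank $1$ over $R[f^{-1}]$ and, by transitivity of localization, also over $R[g^{-1}]$ for every $g \in \langle f \rangle$.

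For the second step, I verify the hypothesis for $(f, N)$: given $g \in \langle f \rangle$ with $N[g^{-1}]$ free, the localized short exact sequence $0 \to Rm_1[g^{-1}] \to M[g^{-1}] \to N[g^{-1}] \to 0$ splits because its quotient is free (hence projective), so $M[g^{-1}]$ is a direct sum of two free modules and therefore free; applying $(\star)$ gives $g = 0$. Induction on $N$, which has $n-1$ generators, then yields $f = 0$, completing the proof.

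The main obstacle is to funnel the single hypothesis $(\star)$ through two distinct $(n-1)$-generator modules without enlarging the ambient ring. The complementary tricks at the two stages are as follows: Step 1 replaces $f$ by $fa$ so that the annihilation $am_1 = 0$ becomes the vanishing $m_1 = 0$ after localization, thereby collapsing $M[g^{-1}]$ onto $\overline{M}[g^{-1}]$; Step 2 then exploits the resulting freeness of $Rm_1[g^{-1}]$ to propagate freeness from $N[g^{-1}]$ up to $M[g^{-1}]$ through a split extension. The reducedness of $R$ is not used explicitly in this argument but guarantees that localizations behave well and is needed in the broader context.
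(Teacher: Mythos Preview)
Your proof is correct, but it differs substantially from the paper's argument. The paper first passes to $R[f^{-1}]$ to reduce to the case $f=1$ (using reducedness via the equivalence $R[f^{-1}]=0 \Leftrightarrow f=0$), and then shows directly that the chosen generators $m_1,\dotsc,m_n$ form a \emph{basis} of $M$: given any relation $\sum_i g_i m_i = 0$, the module $M[g_i^{-1}]$ is generated by the remaining $n-1$ elements over the reduced ring $R[g_i^{-1}]$, so the induction hypothesis (applied over that localized ring) yields $R[g_i^{-1}]=0$, whence $g_i=0$ by reducedness again. Thus $M$ is free and the hypothesis forces $1=0$. This is a very short induction in which the ambient ring changes at every step.

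Your argument instead keeps $R$ fixed throughout and compensates with a two-stage reduction via the submodule $\overline{M}$ and the quotient $N=M/Rm_1$: first you kill $f\cdot\Ann(m_1)$ by applying the induction hypothesis to $(fa,\overline{M})$, and then you use this to make $Rm_1$ free after localization so that freeness propagates from $N[g^{-1}]$ to $M[g^{-1}]$ through a split extension. This is more elaborate, but it has a payoff you undersell in your final sentence: your proof never actually uses that $R$ is reduced, so it establishes the lemma for \emph{arbitrary} commutative rings. The paper's route is shorter and conceptually cleaner (``the generators are already a basis''), while yours is slightly more general and avoids changing the base ring.
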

 
\begin{proof}
By considering the $R[f^{-1}]$-module $M[f^{-1}]$ and using $R[f^{-1}]=0 \Leftrightarrow f=0$, it suffices to consider the case $f=1$. Thus, every $g \in R$ such that $M[g^{-1}]$ is free over $R[g^{-1}]$ satisfies $g=0$, and our goal is to prove $R=0$. Let $m_1,\dotsc,m_n$ be a generating system of $M$. We will argue by induction on $n \in \IN$. The case $n=0$ is trivial. Let $n \geq 1$. We will prove that $m_1,\dotsc,m_n$ is a basis. So let us assume $g_1 m_1 + \dots + g_n m_n = 0$ with $g_1,\dotsc,g_n \in R$. Then for every $1 \leq i \leq n$, the localization $M[g_i^{-1}]$ is generated by $m_1,\dotsc,\widehat{m_i},\dotsc,m_n$, and it satisfies the same assumptions as $M$. Therefore, by induction hypothesis, we conclude $R[g_i^{-1}]=0$ and hence $g_i=0$. This proves that $M$ is free over~$R$. Hence, $1=0$ by assumption.
\end{proof}

\begin{rem}
In the presence of the law of the excluded middle, i.e.\ in classical mathe\-matics, the special case $f=1$ in Lemma \ref{generic-free} says that for $R \neq 0$ there is some element $g \in R \setminus \{0\}$ such that $M[g^{-1}]$ is free over $R[g^{-1}]$. This is the more common formulation of generic freeness. However, this statement is not valid in constructive mathematics. Geometrically, Lemma \ref{generic-free} says that there is a dense open subset $U \subseteq \Spec(R)$ such that $M^{\sim}|_U$ is locally free. In the internal language of the topos of sheaves on $\Spec(R)$ \cite{M74}, this simply says that $M^{\sim}$ is \emph{not not} free. This property may be deduced from the fact that the structure sheaf $R^{\sim}$ is a field from the internal perspective and the observation that finitely generated vector spaces are \emph{not not} free by the usual argument in linear algebra \cite[Lemma 5.9]{B16}. In fact, this argument has just been repeated in our proof of Lemma \ref{generic-free} using the external language.
\end{rem}

\begin{lemma} \label{free-locus}
Let $R$ be a commutative ring. Let $M$ and $N$ be two finitely generated $R$-modules with the following property: If $S$ is a commutative $R$-algebra such that $M \otimes_R S$ is free over $S$, then $N \otimes_R S = 0$. Then we may conclude $N=0$.
\end{lemma}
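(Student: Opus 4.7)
The strategy is to reduce the statement to the generic freeness lemma (Lemma~\ref{generic-free}). Set $J \coloneqq \sqrt{\Ann_R(N)}$; the quotient $R' \coloneqq R/J$ is automatically reduced, and $M' \coloneqq M \otimes_R R'$ remains finitely generated. It suffices to prove $R' = 0$, because this forces $1 \in J$, hence $1 \in \Ann_R(N)$, hence $N = 0$.

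By Lemma~\ref{generic-free} applied with $f = 1$ to the $R'$-module $M'$, showing $R' = 0$ reduces to the following claim: every $g \in R'$ such that $M'[g^{-1}]$ is free over $R'[g^{-1}]$ already vanishes. Given such a $g$, put $S \coloneqq R'[g^{-1}]$; then $M \otimes_R S \cong M'[g^{-1}]$ is free over $S$, so by hypothesis $N \otimes_R S = 0$, which reads $(N/JN)[g^{-1}] = 0$. Since $N/JN$ is finitely generated, some power $g^m$ annihilates it, so $g^m N \subseteq JN$.

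The crucial step is to bootstrap this relative inclusion into $g^s N = 0$ for some $s$, which would give $g \in \sqrt{\Ann_R(N)} = J$ and therefore $g = 0$ in $R'$. Fix generators $n_1, \dotsc, n_r$ of $N$ and choose explicit expressions $g^m n_i = \sum_j a_{ij} n_j$ with $a_{ij} \in J$. Let $J_0$ be the finitely generated subideal of $J$ spanned by the $a_{ij}$; constructively, each generator $a$ of $J_0$ comes with a witness $t_a$ for $a^{t_a} N = 0$, and a pigeonhole argument on monomials of degree $T \coloneqq \sum_a t_a$ shows $J_0^T N = 0$. Since $g^m N \subseteq J_0 N$, an easy induction yields $g^{mk} N \subseteq J_0^k N$, so in particular $g^{mT} N = 0$, as required.

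The main obstacle is precisely this bootstrapping: the radical ideal $J$ is not finitely generated in general, so we cannot nilpotently control all of $JN$ at once. The idea is that only finitely many elements of $J$ actually appear in the chosen expressions for $g^m n_i$, and these assemble into a finitely generated subideal $J_0$ whose power annihilates $N$; this keeps the whole argument constructive and avoids any appeal to prime ideals or the law of the excluded middle.
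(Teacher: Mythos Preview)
Your proof is correct and follows the paper's strategy exactly: pass to $R' = R/\sqrt{\Ann(N)}$, apply generic freeness (Lemma~\ref{generic-free}) to $M' = M \otimes_R R'$, and show that any $g$ making $M'[g^{-1}]$ free must vanish in $R'$ by bootstrapping $g^m N \subseteq JN$ into $g^s N = 0$. The only difference lies in this bootstrapping step. The paper invokes the Cayley--Hamilton trick \cite[Proposition~2.4]{AM69} for the endomorphism $g\cdot\id_N : N \to N$, obtaining a relation $g^n + r_{n-1}g^{n-1} + \dots + r_0 \in \Ann(N)$ with all $r_i \in J$, whence $g^n \in J$ directly. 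You instead extract a finitely generated subideal $J_0 \subseteq J$ from the explicit expressions $g^m n_i = \sum_j a_{ij} n_j$ and iterate $g^{mk}N \subseteq J_0^k N$ until $J_0^T N = 0$. Both arguments are constructive; yours is more self-contained, the paper's is a line shorter. One small notational point: you silently identify $g \in R'$ with a lift in $R$ when writing $g^m N \subseteq JN$; the paper makes this lift explicit.
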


\begin{proof}
Consider the reduced commutative ring $R'\coloneqq R/\sqrt{\Ann(N)}$. The $R'$-modules $M' \coloneqq  M \otimes_R R'$ and $N' \coloneqq  N \otimes_R R'$ satisfy the same assumption as the $R$-modules $M$ and $N$. Assume that $f' \in R'$ has the property that $M'[f'^{-1}]$ is free over $R'[f'^{-1}]$. We will prove $f'=0$. By assumption, we have $N'[f'^{-1}]=0$. Because $N'$ is finitely generated, there is some $k \in \IN$ such that $f'^k N'=0$. Choose a preimage $f \in R$. Then we have $f^k N \subseteq \sqrt{\Ann(N)} N$. Since $f'=0$ is equivalent to $f'^k=0$, we might as well assume that $f N \subseteq \sqrt{\Ann(N)} N$ holds. By \cite[Proposition 2.4]{AM69}, applied to the endomorphism $f \cdot \id_N : N \to N$, there are elements $r_0,\dotsc,r_{n-1} \in \sqrt{\Ann(N)}$ such that
\[f^n + r_{n-1} f^{n-1} + \dots + r_1 f + r_0 \in \Ann(N).\]
This implies $f^n \in \sqrt{\Ann(N)}$ and therefore $f'=0$. We have proven that every $f' \in R'$ such that $M'[f'^{-1}]$ is free over $R'[f'^{-1}]$ satisfies $f'=0$. By Lemma \ref{generic-free}, we conclude $R'=0$. This means $1 \in \Ann(N)$, i.e.\ $N=0$.
\end{proof}

\begin{ex}
Lemma \ref{free-locus} remains true if just $N$ is assumed to be finitely generated, but in general it does not hold. Let $f \in R$ be a regular element, $M \coloneqq R/fR$ and 
\[N \coloneqq  \colim_{n>0}\, R/f^n R.\]
The transition maps are $[x] \mapsto [fx]$. If $M \otimes_R S = S/fS$ is free over $S$, this means that $f \in \ker(R \to S)$ or $f \in S^{\times}$ holds. Thus, $S$ is an $R/fR$-algebra or an $R[f^{-1}]$-algebra. In the first case, we have $N \otimes_R R/fR = N/fN = 0$ and hence $N \otimes_R S = 0$. In the second case, we have $N \otimes_R R[f^{-1}]=N[f^{-1}]=0$ and hence $N \otimes_R S = 0$. But $N=0$ holds if only if $f$ is a unit. So we may take $R \coloneqq \IZ$ and $f \coloneqq 2$.
\end{ex}

\begin{proof}[Constructive proof of Theorem \ref{lin}]
We only have to give a constructive proof of the statement that a linear operation $G:\Mod_{\fg} \to \Mod_{\fg}$ vanishes when it vanishes on free modules, because the rest of the proof was constructive anyway. Let $M$ be a finitely generated $R$-module. If $S$ is a commutative $R$-algebra such that $M \otimes_R S$ is free over $S$, then we have
\[G_R(M) \otimes_R S \cong G_S(M \otimes_R S)=0.\]
Thus, Lemma \ref{free-locus} applies and yields $G_R(M)=0$.
\end{proof}


\phantom{-}

\end{document}